\newcommand{\U}{\mathcal{U}}
\newcommand{\R}{\mathbb{R}}
\theoremstyle{definition}
\newtheorem{defn}{Definition}[subsection]
\theoremstyle{plain}
\newtheorem{lem}[defn]{Lemma}
\theoremstyle{exampstyle}
\theoremstyle{exampstyle}
\def\Ind{\setbox0=\hbox{$x$}\kern\wd0\hbox to 0pt{\hss$\mid$\hss}
\lower.9\ht0\hbox to 0pt{\hss$\smile$\hss}\kern\wd0}
\def\Notind{\setbox0=\hbox{$x$}\kern\wd0\hbox to 0pt{\mathchardef
\nn=12854\hss$\nn$\kern1.4\wd0\hss}\hbox to
0pt{\hss$\mid$\hss}\lower.9\ht0 \hbox to
0pt{\hss$\smile$\hss}\kern\wd0}
\newtheoremstyle{exampstyle}
{3pt} 
{3pt} 
{\itshape} 
{} 
{\bfseries} 
{.} 
{.5em} 
{} 
\title{Generic planar algebraic vector fields are strongly minimal and disintegrated}
\author{Rémi Jaoui}
\thanks{Partially supported by NSF grant DMS-1760212}
\address{Rémi Jaoui, Department of Mathematics, University of Notre Dame}
\email{rjaoui@nd.edu}
\date\today
\begin{document}

\maketitle
\begin{abstract}
In this article, we study model-theoretic properties of algebraic differential equations of order $2$, defined over constant differential fields. In particular, we show that the set of solutions of a ``general'' differential equation of order $2$ and of degree $d \geq 3$ in a differentially closed field is strongly minimal and disintegrated.

We also give two other formulations of this result in terms of algebraic (non)-integrability and algebraic independence of the analytic solutions of a general planar algebraic vector field.
\end{abstract}

\textbf{A question of Poizat and planar vector fields.}  The study of disintegrated differential equations originated from questions concerning properties of the models of the (complete) theory $\textbf{DCF}_0$ of existentially closed differential fields of characteristic $0$ such as the existence of minimal models and the number of non-isomorphic models of a given uncountable cardinality.

Both of these questions were solved in the 70's by Shelah in \cite{She3} and Rosenlicht in \cite{Ros} while providing evidence for the existence of large families of pairwise orthogonal autonomous differential equations of \textit{order one} with trivial forking geometry. While the differential algebraic methods of \cite{Ros} were specific to the order one case, it was expected that a similar picture should hold for differential equations of higher order, namely: 

\begin{itemize}
\item[(1)] A ``sufficiently general'' differential equation of order $n \geq 2$ defines a strongly minimal set of the theory $\textbf{DCF}_0$.
\item[(2)] A ``sufficiently general'' differential equation of order $n \geq 2$ is disintegrated. In other words, its set of solutions has trivial forking geometry.
\end{itemize}

The first point is already suggested in \cite{She3} and appears explicitly in \cite{Poi3}.  Here, the word ``general'' is understood in an algebraic fashion: namely, a differential equation is general if its coefficients --- when viewed in a sufficiently large (in particular non linear) family of differential equations ---  bear no particular differential algebraic relations. 

The second point  formulated in \citep[pp560]{Poi2} expresses that every algebraic dependence between solutions of a ``sufficiently general'' differential equation comes from  a combination of binary algebraic dependencies (disintegration property). The same property can also be phrased as a degeneracy of the combinatorial geometry associated to this strongly minimal set, hence the terminology of trivial forking geometry.   It is worth mentioning at this point that ---  in contrast with the order one case --- disintegration (or trivial forking geometry) does not imply $\omega$-categoricity in general: namely, there are disintegrated strongly minimal sets $X$ such that for no definable equivalence relation $E$ on $X$ with finite classes, $X/E$ is trivial i.e. a pure infinite set with no structure at all (see \cite{Frei} and \cite{Cas}).

During the nineties, an impressive classification of the \textit{non disintegrated minimal types} of the theory $\textbf{DCF}_0$ was achieved by Hrushovski and Sokolovic in \cite{Sok}, strengthening even more the interest towards problems (1) and (2): while a powerful ``structure theory'' is available for non-disintegrated minimal types, general differential equations should lie on the other side of the picture, justifying furthermore that disintegration and minimality are prominent phenomena for algebraic differential equations.

In this article, we settle the problem for planar algebraic vector fields and  show that the differential equation  (of order two) associated to a general planar algebraic vector field of degree $d \geq 3$  satisfies the conclusions of  (1) and (2):

{\thmx[Model-theoretic version]\label{introa} Let $d \geq 3$ and let $v$ be a planar algebraic vector field of degree $d$ with $\mathbb{Q}$-algebraically independent coefficients.

The set of solutions of the differential equation $(\mathbb{A}^2,v)$ in a differentially closed field is strongly minimal and disintegrated.} 
\vspace{0.2cm}

Fixing $d \geq 1$, a complex planar vector field of degree $\leq d$ is represented as:
$$ v(x,y) = f(x,y) \frac \partial {\partial x}  + g(x,y) \frac \partial {\partial y} 
\text{ with } f(x,y),g(x,y) \in \mathbb{C}[x,y] \text{ of degree }\leq d.$$ So we can view the family of complex planar algebraic vector fields of degree $\leq d$ as an $\emptyset$-definable family of vector fields $(v_s, s \in S(\mathbb{C}))$ parametrized by an affine space $S = \mathbb{A}^N$ representing the coefficients of the polynomials $f(x,y)$ and $g(x,y)$. Theorem \ref{introa} says that for $d \geq 3$, the property: 
\begin{center}
$\mathcal P(s)$: the set of solutions of $(\mathbb{A}^2, v(s))$ in a differentially closed field is strongly minimal and disintegrated.
\end{center}
holds for generic elements (over $\emptyset$) of $S(\mathbb{C})$, namely for vector fields $v(s)$ of degree $d$ and $\mathbb{Q}$-algebraically independent coefficients (in particular no coefficient of $v(s)$ is allowed to vanish).

For $d = 1$, it is easy to see that the conclusion of the theorem fails but I don't know if Theorem \ref{introa} holds for $d = 2$. The hypothesis $d \geq 3$ comes from the fact if one replaces the affine plane $\mathbb{A}^2$ by the affine line $\mathbb{A}^1$, Theorem \ref{introa} holds for $d \geq 3$ but not for $d = 2$ (see \cite{Ros}).

It is worth mentioning that for any $\emptyset$-definable family as above, using standard arguments involving that the field of constants of a differentially closed field is an algebraically closed field with no additional structure, we show in Section 3.3 that the following are equivalent:

\begin{itemize}
\item[(i)] The property $\mathcal P(s)$ holds on a measurable set of parameters $s \in S(\mathbb{C})$ of positive Lebesgue measure.
\item[(ii)] The property $\mathcal P(s)$ holds on a somewhere dense $G_\delta$-set of parameters $s \in S(\mathbb{C})^{an}$: the property $\mathcal P(s)$ holds on a countable intersection $A = \bigcap_{i \in \mathcal I} U_i$ of open sets $U_i$ of  $S(\mathbb{C})^{an}$ such that $\overline{A}$ has non empty interior. 
\item[(iii)] The property $\mathcal P(s)$ holds for all complex vector fields $v \in S(\mathbb{C})$, outside a countable union of proper closed subvarieties defined over $\mathbb{Q}$.
\end{itemize}
 
This simple statement is already an interesting tool to study  the generic behavior of an $\emptyset$-definable family of vector fields as it allows to focus the analysis on specific open semi-algebraic subsets of the parameter space $S(\mathbb{C})$. In the case of complex planar vector fields, one can for example assume that the vector field admits an hyperbolic singularity --- namely that the eigenvalues of the linear part of $v$ along one of its zero are non zero and satisfy $\lambda/\mu \notin \R_-$ --- by restricting to appropriate open semi-algebraic subsets of $S(\mathbb{C})$.    \\

The methods of this article are flexible enough to obtain variants of Theorem \ref{introa} for other ``ample enough'' parameter spaces of planar algebraic vector fields. For example, the proof of Theorem \ref{introa} can be copied \textit{mutatis mutandis} to prove a variant of Theorem \ref{introa} where the degree of the vector field is replaced by the degree of the associated foliation (see \citep[Chapter V, Section 25]{Ily} for an extensive discussion concerning the differences between these two cases). 

A positive answer to Poizat's question for affine vector fields of higher dimensions --- more precisely, a generalization of Theorem \ref{introa} for vector fields of degree $d \geq 3$ on affine spaces $\mathbb{A}^n$ of arbitrary dimension --- is also expected, as well as for ample enough families of  vector fields over more general varieties than the affine space. I expect such generalizations to follow from similar methods to the ones of this article. This is the subject of ongoing work. \\

We now describe two consequences of Theorem \ref{introa} outside of model-theory: the first one concerns properties of integrability of the solutions of a general vector field of degree $d \geq 3$ while the second one involves algebraic independence of its solutions. \\

\textbf{Irreducibility of general planar vector fields.}  It is well known that the solutions of a general planar algebraic vector fields are transcendental functions (see \cite{Petro}). Among other things, Theorem \ref{introa} expresses a stronger non-integrability statement for the solutions of general planar vector fields of degree $d \geq 3$ known as \textit{irreducibility in the sense of Nishioka-Umemura} (see \citep[Section 2]{Umemura}). \\

A differential field extension $(K,\delta)$ of \textit{$1$-classical functions}  is a differential field with field of constants the field of complex numbers and that can  can be built from $(\mathbb{C}(t),  \frac d  {dt})$ as a tower 
$$(\mathbb{C}(t),  \frac d  {dt}) = (K_0,\delta_0) \subset (K_1,\delta_1) \subset \ldots \subset  (K_n,\delta_n) = (K,\delta_K)$$
   of differential fields  where each step of this tower is constructed either as an \textit{algebraic extension, a strongly normal extension or a differential extension of transcendence degree $1$}. Thanks to the first assumption, any field of $1$-classical functions can be realized as a differential field of meromorphic functions on some open set $U \subset \mathbb{C}$ (as described in Section 1 of \cite{Umemura}).  As noted in the appendix of \cite{Nag2}, these differential fields of $1$-classical functions are the ones associated to isolated types analyzable in the constants together with all order one differential equations (over arbitrary small sets of parameters).

The following irreducibility property for general planar algebraic vector fields of degree $d \geq 3$ is a direct consequence of Theorem \ref{introa}:

{\corx\label{cora} Let $v$ be a general planar algebraic vector field of degree $d \geq 3$ and let $(K,\delta)$ be any field of $1$-classical functions. Then:
$$(\mathbb{A}^2,v)^{(K,\delta)} = (\mathbb{A}^2,v)^{(\mathbb{C},0)} = \lbrace \text{constant solutions  of }  (\mathbb{A}^2,v)  \rbrace.$$}
 
 Here, $(\mathbb{A}^2,v)^{(K,\delta)}$ denotes the set of solutions of the differential equation $(\mathbb{A}^2,v)$ inside the differential field $(K,\delta)$. After writing  $v(x,y) = f(x,y) \frac \partial {\partial x} + g(x,y) \frac \partial {\partial y}$, it is the subset of $K^2$ of elements $(x,y)$ such that $\delta(x) = f(x,y)$ and $\delta(y) = g(x,y)$.
 
Corollary \ref{cora} expresses that no non constant solution of $(\mathbb{A}^2,v)$ belongs to a differential field of $1$-classical functions. Note that the set $(\mathbb{A}^2,v)^{(\mathbb{C},0)}$ of constant solutions of $(\mathbb{A}^2,v)$  is well-known: it is the finite set constituted by the zeros of $v$ or, in other words, the common zeros of the polynomials $f(x,y)$ and $g(x,y)$ (which has cardinality $d^2$ for a general planar algebraic vector field of degree $d$). \\

\textbf{Algebraic independence of the solutions of general planar vector fields.} The study of closed subvarieties of a complex algebraic variety $X$ invariant under the action of the analytic flow of a vector field $v$ is also an active theme of the theory of complex dynamical systems.  Algebraically, a complex  closed subvariety $Z$ of $X$ is called \textit{invariant} if the associated sheaf of ideals $\mathcal I_Z$ of $\mathcal O_X$ is invariant under the derivation $\delta_v$ induced by $v$ on $\mathcal O_X$. \\

Closed invariant subvarieties  of general vector fields on algebraic varieties have been extensively studied by geometers: the case of planar polynomial vector fields is first studied in \cite{Petro} (see \citep[Appendix, Chapter 5]{Ily} for a presentation in a modern language), while higher dimensional cases are studied in \cite{Lor} and \cite{Per} in the language of foliations. 

 An equally interesting question --- strongly motivated by the point of view of model theory on differential equations --- is to understand  more generally the algebraic relations among analytic solutions of a differential equation $(X,v)$ or, in other words, the closed subvarieties of $X^n$ invariant under the product vector field $v \boxplus \ldots \boxplus v$. In the language of model theory, the natural generalization of Landis-Petrovskii's result in \cite{Petro} takes the following form:

{\conj \label{remi} The structure of the solutions of a general planar algebraic vector field of degree $d \gg 0$ is an expansion of a strictly disintegrated set (a pure infinite set with no structure at all) by $d^2$ constant symbols to name the complex singularities of the vector field $v$.}
\vspace{0.2cm}

This conjecture would achieve a complete classification of the algebraic relations shared by the solutions of a differential equation associated to a general planar algebraic vector field.  Formulated at the level of analytic solutions, the previous conjecture can be split into a sequence $((C_n) : n \in \mathbb{N})$ of statements concerning $n$-tuples of analytic solutions of a general vector field $v$ of degree $d \gg 0$:  \\

\textit{
$(C_n): $ If $\gamma_1, \ldots , \gamma_n$ are $n$ distinct non stationary analytic solutions of the differential equation $(\mathbb{A}^2,v)$, then $\gamma_1, \ldots, \gamma_n$ are algebraically independent over $\mathbb{C}$.} \\

An analytic solution $\gamma$ of $(\mathbb{A}^2,v)$ is stationary if its image is a point and, since it is a solution of the vector field $v$, this point has to be one of the zeroes of $v$.  In that form, the statement $(C_1)$ is equivalent to Landis-Petrovskii Theorem while $(C_2)$  is an open question. A consequence of Theorem \ref{introa} is that in fact for $n \geq 2$, the implication  $(C_2) \Rightarrow (C_n)$ holds for general planar vector fields:

{\corx Let $d \geq 3$ and $v$ be a general planar algebraic vector field of degree $d$.  Then $(\mathbb{A}^2,v)$ satisfies the disintegration property.

In particular, for all $n \geq 2$, the implication $(C_2) \Rightarrow (C_n)$ holds for any general planar vector field of degree $d \geq 3$.}

Recall that we say that the vector field $v$ \textit{satisfies the disintegration property} if any irreducible closed invariant subvariety of $(X,v) ^n$ for some $n \geq 3$ can be written as an irreducible component of
$$\bigcap_{1 \leq i \leq j \leq n} \pi_{i,j}^{-1}(Z_{i,j})$$
where $\pi_{i,j}: X^n \rightarrow X^2$ denotes the projection on the coordinates $i$ and $j$ and $Z_{i,j}$ is an irreducible closed invariant subvariety of $(X,v) \times (X,v)$. \\

\textbf{Trichotomy in $\textbf{DCF}_0$.} The strategy for the proof of Theorem \ref{introa} follows the strategy of \cite{moi2} and \cite{moi3} to study geodesic flows of pseudo-Riemannian varieties with negative curvature of dimension two. \\

This strategy focuses on the study of the generic type of the differential equation $(X,v)$ and, more importantly, relies on the Trichotomy theorem of Hrushovski and Sokolovic and on \textit{the full classification of non-disintegrated locally modular types} of \cite{Sok} (more precisely, on the fact that all non-disintegrated locally modular types are orthogonal to any type with parameters in the constants, which is itself a consequence of the full classification). Using this powerful result, the proof of Theorem \ref{introa} reduces to the two following steps:
\begin{itemize}
\item[(a)] Establish that the generic type of $(X,v)$ is orthogonal to the constants.
\item[(b)] Establish that the differential equation $(X,v)$ does not admit non-trivial algebraic factors.
\end{itemize}

Recall that a \textit{rational factor} of a differential equation $(X,v)$ is a dominant rational morphism $\phi: (X,v) \dashrightarrow (Y,w)$ in the category of algebraic varieties endowed with vector fields, namely a dominant rational morphism $\phi: X \dashrightarrow Y$ satisfying $d\phi(v) = w$. An \textit{algebraic factor} of $(X,v)$ is a rational factor of a generically finite extension (or  a generically finite cover) of $(X,v)$. Following the terminology of \cite{Bui}, an algebraic variety $X$ endowed with a vector field $v$ will also be called a \textit{$D$-variety} $(X,v)$. \\

The property of orthogonality to the constants has been extensively studied for differential equations defined over constant parameters in \cite{moi}. In particular, (a) was settled in \cite{moi} for affine general algebraic vector fields of degree $d \geq 3$ in any dimension (see \citep[Théorème D]{moi}).  One of the main additional results of this article is the following geometric criterion preventing the existence of non-trivial algebraic factors of a vector field on a smooth complex algebraic variety of dimension two:
  
{\thmx\label{introc} Let $(X,v)$ be a smooth, irreducible complex $D$-variety of dimension $2$. Denote by $\mathcal F(v)$ the foliation on $X$ tangent to $v$. Assume there exists a zero $p \in X(\mathbb{C})$ of $v$ such that:
\begin{center}
(no algebraic separatrix) there is no complex closed algebraic curve on $X$ containing $p$ and invariant  under $v$.
 \end{center}
 Then the only algebraic webs on $X$ invariant under the vector field $v$ are of the form $$\mathcal W = \mathcal F(v) \boxtimes \cdots \boxtimes \mathcal F(v).$$
 Moreover, $(X,v)$ does not have any algebraic factor of dimension $1$.}

\vspace{0.3cm}
To deduce Theorem \ref{introa}, we will only use the second part of Theorem \ref{introc}, namely that under the hypotheses of Theorem \ref{introc}, $(X,v)$ does not admit any algebraic factor of dimension one.  The point of view of this article is to study these algebraic factors as part of a more general problem: the classification of  foliations and webs living on $X$ and \textit{invariant under the Lie derivative of the vector field $v$}.

We give a self contained introduction to algebraic foliations and webs on a smooth algebraic variety $X$ of dimension two in the first section. A central object is the projective bundle $\pi: \mathbb{P}(T_X) \rightarrow X$ formed by the lines of the tangent bundle $T_X$ of $X$. Foliations and webs on $X$ correspond respectively to the rational and  to the algebraic sections of this projective bundle.

We show in section 2.2 that the problem of classification of foliations and webs invariant under a vector field $v$  reduces to finding the algebraic solutions (in $\mathbb{C}(X)^{alg}$) of a Ricatti equation with parameters in the (differential) function field $\mathbb{C}(X)$ of $X$. Once such a classification has been established, one obtains the rational and algebraic factors of $(X,v)$ by studying respectively when these foliations and webs are \textit{algebraically integrable}. 

Theorem \ref{introc} says that --- for $v$ is a vector field on a smooth complex surface $X$ ---  if $v$ vanishes at a point and that this point is not contained in any closed algebraic curve $C$ on $X$ invariant under $v$, then there are as few webs as possible on $X$ invariant under the vector field $v$. The analysis of algebraic factors then follows as described above. \\

The article is organized as follows: in the first section, we gather some facts about foliations and webs on smooth algebraic surfaces in the setting of complex algebraic geometry. Classical references are given at the beginning of the section.

Differential algebra starts in the second section where we describe the main notion of the article: foliations and webs invariant under a vector field. We describe extensively this notion for a smooth complex $D$-variety $(X,v)$ of dimension two and we deduce Theorem \ref{introc} from our analysis.

The third section is the model-theoretic section where we combine Theorem \ref{introc}  with arguments from geometric stability theory to prove Theorem \ref{introa}. In the last section, we follow routine procedures to deduce the two corollaries of this introduction from Theorem \ref{introa}.  \\

%

\textbf{Acknowledgments. } I am very grateful to Jean-Beno\^it Bost for suggesting that foliation theory would be a useful tool to study problems (1) and (2). I am also grateful to J.V. Pereira for different suggestions during the meeting ``Algebraic geometry and foliations'' in June 2019 in Grenoble, which led to a substantial improvement of the first version of this article. Finally, I also would like to thank Levon Haykazyan, Martin Hils, Rahim Moosa and Anand Pillay for many interesting discussions concerning this article.

\tableofcontents

\section{Preliminaries on algebraic foliations and webs in dimension two}

In this section, we fix $X$ \textbf{a smooth complex algebraic variety of dimension two}. $T_X$ denotes the complex tangent space of $X$ and $\Omega^1_X$ the sheaf of one-forms on $X$. Since $X$ is assumed to be smooth, $\pi: T_X \rightarrow X$ is a vector bundle over $X$ and $\Omega^1_X$ is a locally free sheaf of rank two.

We also consider the projectivization $\pi: \mathbb{P}(T_X) \rightarrow X$ of the tangent bundle on $X$ whose complex points correspond to the set of complex lines in $T_X(\mathbb{C})$ and the associated diagram:

$$\xymatrix{T_X \setminus \lbrace 0-\text{section} \rbrace \ar[d] \ar[r]  &  \mathbb{P}(T_X) \ar[ld] \\ X}$$ 
 
Classical references for webs and foliations in dimension two are \cite{Bru} and \cite{Pirio} (see also section 2 of \cite{Favre-Pereira}).

\subsection{Algebraic foliations with singularities} A \textit{regular foliation} on $X$ is equivalently:
\begin{itemize}
\item a regular section of $\pi: \mathbb{P}(T_X) \rightarrow X$
\item a sub vector bundle of rank one $F$ of the vector bundle $T_X$. 
\end{itemize}

{\defn A (possibly singular) \textit{foliation} on $X$ is an invertible subsheaf $\mathcal F$ of $\Omega^1_X$ such that $\Omega^1_X/\mathcal F$ does not have torsion.}

So if $X = \mathrm{Spec}(A)$ is affine, a possibly singular foliation on $X$ is an $A$-submodule $M_\mathcal F$ of $\Omega^1_A$ which is free of rank one and such that $\Omega^1_A/M_A$ does not have torsion.  This is a weakening of the notion of regular foliation since in the regular case $\Omega^1_A/M_A$  is free of rank one (so in particular, torsion free).

{\Prop[{\citep[Proposition 2.3.1]{moi3}}]  \label{saturationoffoliation} Let $U \subset X$ be a dense open set. Any foliation on $U$ extends uniquely to a foliation on $X$.

In particular, there is natural correspondence between the set of (possibly singular) foliations on $X$ and the set of rational sections of $\pi: \mathbb{P}(T_X) \rightarrow X$.}

We will use the following decomposition of a foliation $\mathcal F$ on $X$ into its (finite) singular part and its regular part:
\begin{itemize}
\item a finite set $S = \mathrm{Sing}(\mathcal F) \subset X$ of \textit{singularities}, which are the points $x \in X$ such that  $\Omega_X/\mathcal F$ is not locally free at $x$.
\end{itemize}
If $\sigma_\mathcal F: X \dashrightarrow \mathbb{P}(T_X)$ denotes the rational section corresponding to $\mathcal F$ then  $\mathrm{Sing}(\mathcal F)$ is the set of points where this rational section is not regular.
\begin{itemize}
\item a regular foliation $F$ on the open set $U = X \setminus S$: every $p \in U$ admits a neighborhood $V$ such that $\mathcal F_{\mid V}$ is generated by a one-form $\omega$ which does not vanish on $V$ and
$$ F_{\mid V} =  \lbrace (x,v) \in T_V \text{ ,  } \omega_x(v) = 0 \rbrace$$
is a subvector bundle of $T_V$.
\end{itemize}

\rem If $p \in S$ is a singularity of $\mathcal F$, there still exists a neighborhood $V$ such that $\mathcal F_{\mid V}$ is generated by a one-form $\omega$  but the one-form $\omega$ vanishes at $p$. So, if $V^\ast = V \setminus \lbrace p \rbrace $, we can still write 
$$F_{\mid V^\ast} =  \lbrace (x,v) \in T_{V^\ast} \text{ ,  } \omega_x(v) = 0 \rbrace.$$
  
The study of singularities of foliations and more precisely of normal forms for the previous expression in both analytic and formal coordinates plays a central role in the birational study of foliations (see \citep[Chapter 1]{Bru}).

\subsection{Algebraic webs with singularities} Algebraic webs play the same role than foliations but for algebraic sections of $\pi: \mathbb{P}(T_X) \rightarrow X$ instead of rational ones.

\nota \label{notationweb} We denote by 
$$\mathrm{Sym}^\bullet (\Omega^1_X)  = \oplus_{r \in \mathbb{N}} \mathrm{Sym}^r (\Omega^1_X) $$
the sheaf of symmetric graded $\mathcal O_X$-algebra attached to the sheaf $\Omega^1_X$ of one-forms on $X$ and we denote the product in this algebra by:
$$\boxtimes : \mathrm{Sym}^r (\Omega^1_X)  \times \mathrm{Sym}^s (\Omega^1_X)  \rightarrow \mathrm{Sym}^{r + s} (\Omega^1_X) $$
 
 For every $r \in \mathbb{N}$, $\mathrm{Sym}^r (\Omega^1_X)$ is a locally free sheaf on $X$ of rank  $r + 1$ that can be described locally (in étale coordinates) as follows:  $X$ can be covered by affine open sets  $U$ with functions $x,y \in \mathcal O_X(U)$ such that  
 $$(x,y): U \rightarrow  \mathbb{A}^2 \text{ is étale}.$$
This means that  $ \Omega^1_X(U) =  \mathcal O_X(U) dx \oplus  \mathcal O_X(U) dy$ so that we get an isomorphism of graded $\mathcal O_X(U)$-algebras
$$ \mathrm{Sym}^\bullet (\Omega^1_X)(U) \cong \mathcal O_X(U)[dx,dy]$$
where the right-hand term is the polynomial algebra over the ring  $\mathcal O_X(U)$ with indeterminates $dx$ and $dy$, graded by the total degree.

{\lem Let $U \subset X$ be an affine open set endowed with étale coordinates $x,y \in \mathcal O_X(U)$ and denote by $p : T_X \rightarrow X$ the canonical projection. Then $p^{-1}(U)$ is affine and 
$$\mathcal O_{T_X}(\pi^{-1}(U)) \cong  \mathrm{Sym}^\bullet (\Omega^1_X)(U) \cong \mathcal O_X(U)[dx,dy] $$}

See Ex 5.16, Chapter II of  \cite{Hartshorne-book}. 

{\defn Let $r \geq 1$. A (possibly singular) \textit{$r$-web of foliations by curves on $X$}  is an invertible subsheaf $\mathcal W$ of the locally free sheaf $\mathrm{Sym}^r(\Omega^1_{X/k})$ such that  $\mathrm{Sym}^r(\Omega^1_{X/k})/\mathcal W$ does not have torsion. }

For $r = 1$, a $1$-web is identified with a (possibly singular) foliation on $X$ through the identification $\mathrm{Sym}^1(\Omega^1_X) = \Omega^1_X$. Generalizing the previous notion for foliations, we define $\mathrm{Sing}(\mathcal W)$ as the set of points $x \in X$ such that  $\mathrm{Sym}^r(\Omega^1_{X/k})/\mathcal W$ is not locally free at $x$.
Since  $\mathrm{Sym}^r(\Omega^1_{X/k})/\mathcal W$ does not have torsion, $\mathrm{Sing}(\mathcal W)$ is always a finite set.

{\Prop \label{web-saturation} Let $U \subset X$ be a dense open set and $r \geq 1$. Any $r$-web on $U$ extends uniquely to an $r$-web on $X$.}

\begin{proof}
Let  $\mathcal W$ be an $r$-web on $U$. There exists a coherent subsheaf $\mathcal W_X$ of $\mathrm{Sym}^r(\Omega^1_{X/k})$ such that $\mathcal W_{X |U} = \mathcal W$: namely, the sheaf $\mathcal W$  of $r$-symmetric $1$-forms such that $\omega_{|U} \in \mathcal W$, which, as a simple verification shows, is quasi-coherent, hence coherent (see Ex 5.15, Chapter II of  \cite{Hartshorne-book}). 

Let $\overline{\mathcal W}$ be the saturation of $\mathcal W_X$ in $\mathrm{Sym}^r(\Omega^1_{X/k})$(see \cite{Har}). By definition, $\overline{\mathcal W}$ is a saturated subsheaf of $\mathrm{Sym}^r(\Omega^1_{X/k})$, whose restriction to $U$ is $\mathcal W$. 
\end{proof}

{\defn A \textit{horizontal divisor $D$ of $\mathbb{P}(T_X)$} is a Weil-divisor $D = \sum k_i Z_i$ where $k_i \in \mathbb{Z}$ are integers and $Z_i$  are closed irreducible hypersurfaces of  $\mathbb{P}(T_X)$  which dominate $X$ that is: 
$$ \pi_i: \pi_{\mid Z_i}: Z_i \rightarrow X \text{ is dominant.}  $$ }

This last requirement is equivalent to $\pi_i$ surjective using the projectivity of $Z_i$ over $X$. Moreover, the generic fibre of $\pi_i$ is always finite. We denote $n_i$ the cardinality of the generic fibre and set:
$$\mathrm{deg}(D) = \sum k_in_i \in \mathbb{Z}.$$

By definition, the set  $\mathrm{Div}_h(\mathbb{P}(T_X))$ of horizontal divisors of $\mathbb{P}(T_X)$  is an abelian group and $\mathrm{deg}$ is   a morphism of groups. Recall that a divisor $D = \sum k_i Z_i$ is effective if $k_i \geq 0$ for every $i$.

 {\Prop \label{equivalence webs algebraic sections} Let $r \geq 1$. There is a one to one correspondence $\mathcal W \mapsto D(\mathcal W)$ between:
\begin{itemize}
\item[(i)] the set of  $r$-webs on $X$,
\item[(ii)] the set of horizontal divisors $D \in \mathrm{Div}_h(\mathbb{P}(T_X))$ which are effective and of degree $r$.
\end{itemize}}

\begin{proof}
If $U \subset X$ is a dense open set, by definition of horizontal divisors,  the restriction homomorphism  
$$\mathrm{Div}_h(\mathbb{P}(T_X)) \rightarrow  \mathrm{Div}_h(\mathbb{P}(T_U))$$ is an isomorphism. Therefore, it suffices to prove the statement for some dense open set $U  \subset X$. So we assume that $X$ is affine and endowed with two étale coordinates $x,y \in  \mathcal O_X(X) = A$. With the notation \ref{notationweb}, we write  $$A[dx,dy] \simeq  Sym^\bullet(\Omega^1_A) \simeq \mathcal O_{T_X}(T_X) $$ 

Let $\mathcal W$ be an $r$-web on $X$. Since $X$ is affine, the web $\mathcal W$ is generated by a global symmetric $r$-form $$\omega = f_0 (dx)^r + f_1 dx(dy)^{r - 1} + \cdots + f_r (dy)^{r}   \in Sym^r(\Omega^1_A).$$ We define $Z_\mathcal W$  to be the divisor of $T_X$ corresponding to the vanishing of the symmetric $r$-form  $\omega$ seen as a global function on $T_X$ and $D(\mathcal W)$ its image in $\mathbb{P}(T_X)$ i.e.
$$ Z_\mathcal W := (\omega = 0)\text{  and } D(\mathcal W)  = \rho(Z_\mathcal W)$$
where $\rho: T_X \setminus \lbrace 0-section \rbrace \rightarrow \mathbb{P}(T_X)$. Note that, as $\omega$ is an homogeneous polynomial,  $D(\mathcal W)$ is indeed a divisor of $\mathbb{P}(T_X)$.

We easily see that:

\begin{itemize}
\item The definition of $D(\mathcal W)$ does not depend on the chosen generating symmetric $r$-form $\omega \in \mathrm{Sym}^r(\Omega^1_A)$ (as two such generators differ by multiplication by an element in $A^\ast$).
\item Since  $\mathrm{Sym}^r(\Omega^1_A)/\mathcal W$ does not have torsion, the set of common zeros of $f_0,\ldots f_r$ is finite and  the divisor $D(\mathcal W)$ is horizontal.
\end{itemize}

Conversely, given an effective horizontal divisor $D$ of degree $r$, we can write $D :=(\omega = 0)$  for some homogeneous polynomial $\omega \in A[dx,dy]$ of degree $r$. Denote by $\mathcal W = \mathcal W(D)$ the $A$-submodule of $Sym^r(\Omega^1_A)$ generated by $\omega$. Equivalently $\mathcal W(D)$ is the $A$-submodule of $\mathrm{Sym}^r(\Omega^1_A)$ of symmetric $r$-forms  which vanishes on $D$.  

{\asser Since $D$ is a horizontal divisor, $\mathrm{Sym}^r(\Omega^1_A)/ \mathcal W$ is torsion-free.}

\vspace{0.1cm}

Indeed, consider $f.\omega_0 \in \mathrm{Sym}^r(\Omega^1_A)$ which is sent to $0$ in $\mathrm{Sym}^r(\Omega^1_A)/ \mathcal W$. This means that   $f.\omega_0$ vanishes on $D$. Denote by $C$ the curve in $X$ defined by $f = 0$. Since $D$ is horizontal, $D \setminus \pi^{-1}(C)$ is (scheme theoretically) Zariski-dense in $D$.  

As $\omega_0$ vanishes on $D \setminus \pi^{-1}(C)$, it also vanishes on the Zariski-closure $D$ so that $\omega_0$ is sent to $0$ in $\mathrm{Sym}^r(\Omega^1_A)/ \mathcal W$. It follows that $\mathrm{Sym}^r(\Omega^1_A)/ \mathcal W$ is torsion free.

Moreover, by construction, $D = D(\mathcal W(D))$ and $\mathcal W = \mathcal W (D(\mathcal W))$. 
\end{proof}
%
%
%

{\defn Let $\mathcal W$ be an $r$-web on $X$. We write $D(\mathcal W) = \sum k_i Z_i$ where $k_i > 0$ and $Z_i$ irreducible hypersurfaces. We denote by $\mathcal W_{red}$ the web on $X$ corresponding to the horizontal divisor $Z_1 + \ldots +  Z_n \in \mathrm{Div}_h(\mathbb{P}(T_X))$.}

 \subsection{Operations on webs} 
 \subsubsection{Sum of webs} If $\mathcal W_1$ and $\mathcal W_2$ are respectively an $r_1$-web and an $r_2$-web on $X$, we can form the $( r_1 + r_2)$-web $ \mathcal W_1 \boxtimes \mathcal W_2$ on $X$ whose local sections are of the form $\omega_1 \boxtimes \omega_2$ where $\omega_1$ and $\omega_2$ are local sections of $\mathcal W_1$ and $\mathcal W_2$ respectively.
 
At the level of horizontal divisors, this operation corresponds to the sum of divisors.

{\defn  Let $\mathcal W$ be an $r$-web on $X$. We say that $\mathcal W$ is \textit{completely decomposable} if $\mathcal W$ is of the form:
$$ \mathcal W = \mathcal F_1 \boxtimes \cdots  \boxtimes \mathcal F_r$$
 where $\mathcal F_1,\ldots, \mathcal F_r$ are foliations on $X$.}

 \subsubsection{Pull-backs of webs} Let $\phi: X' \dashrightarrow X $ be a generically finite dominant morphism between smooth irreducible algebraic surfaces. There is a pull-back operation:
 $$\mathcal W \mapsto \phi^\ast \mathcal W$$
 which associates to an $r$-web on $X$, an $r$-web on $X'$.
 
It is defined as follows: consider any dense open set $U'$ in $X'$ such that $\phi_{|U'}: U' \rightarrow  X$ is regular and étale. Since $\phi$ is regular, we can pull back symmetric forms from $X$ to $U'$ and we define $\phi^\ast \mathcal W_{|U'}$ to be the web on $U'$ generated by local sections of the form $\phi^\ast \omega$ where $\omega$ is a local section of $\mathcal W$.

By Proposition \ref{web-saturation}, the web $\phi^\ast \mathcal W_{|U'}$ extends uniquely to a web on $X$ denoted $\phi^\ast \mathcal W$. This construction does not depend on the choice of the open set $U'$.

 \begin{lem}
Let $\mathcal W$ be an $r$-web on $X$. There exists a generically finite cover $m : X' \dashrightarrow X$ such that
 $m^\ast \mathcal W$ is completely decomposable. 
 \end{lem}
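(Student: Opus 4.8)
The plan is to reduce the statement to the factorization of a binary form over the function field $K = \mathbb{C}(X)$ and to realize its splitting field as the function field of the desired cover. First I would pass to the generic point: choosing an affine open $U \subseteq X$ with étale coordinates $x,y$ as in Notation \ref{notationweb}, the web $\mathcal W$ is generated over $K$ by a single homogeneous symmetric $r$-form
$$\omega = f_0 (dx)^r + f_1 (dx)^{r-1} dy + \cdots + f_r (dy)^r, \qquad f_i \in K,$$
and by Propositions \ref{web-saturation} and \ref{equivalence webs algebraic sections} the web $\mathcal W$ is entirely determined by the class of $\omega$ up to multiplication by $K^\ast$. Viewing $\omega$ as a binary form of degree $r$ in the indeterminates $dx, dy$, the key observation is that over the algebraic closure $\overline{K}$ it factors as a product $\omega = c\prod_{i=1}^r \ell_i$ of linear forms $\ell_i = a_i\, dx + b_i\, dy$.

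Next I would let $L$ be the finite extension of $K$ generated by the coefficients (equivalently the ratios $[a_i : b_i]$) of such a factorization, i.e. the splitting field of $\omega$; since we work in characteristic $0$ there is no separability issue, and $L/K$ is finite. By resolution of singularities I would choose a smooth irreducible complex algebraic surface $X'$ with $\mathbb{C}(X') = L$ for which the inclusion $K \hookrightarrow L$ is induced by a dominant rational map $m: X' \dashrightarrow X$; because $[L:K] < \infty$ and both surfaces have transcendence degree $2$, the map $m$ is generically finite, hence a generically finite cover in the sense required for the pull-back operation $m^\ast \mathcal W$.

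Finally I would compute $m^\ast \mathcal W$. On a dense open subset of $X'$ where $m$ is regular and étale, $m^\ast \mathcal W$ is generated by $m^\ast \omega$, and over $L = \mathbb{C}(X')$ this form now splits completely: $m^\ast \omega = c \prod_{i=1}^r \ell_i$ with all $\ell_i$ defined over $L$. Each linear form $\ell_i$ is a rational section of $\pi: \mathbb{P}(T_{X'}) \to X'$ and hence, by Proposition \ref{saturationoffoliation}, extends to a (possibly singular) foliation $\mathcal F_i$ on $X'$, while the unit $c \in L^\ast$ does not affect the web. Passing to the associated horizontal divisors and using that $\boxtimes$ corresponds to their sum, I obtain
$$m^\ast \mathcal W = \mathcal F_1 \boxtimes \cdots \boxtimes \mathcal F_r,$$
so that $m^\ast \mathcal W$ is completely decomposable, as required.

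The conceptual content is simply that a binary form of degree $r$ splits over its splitting field, which is a finite — hence geometrically a generically finite covering — extension. I expect the only real points requiring care to be bookkeeping ones: checking that the pull-back $m^\ast \mathcal W$ defined by saturation from the étale locus genuinely agrees with the web generated by the split form $m^\ast\omega$ (so that multiplicities and the leading coefficient are correctly accounted for, including roots ``at infinity'' when $f_0$ vanishes, which is why it is cleaner to carry out the factorization projectively on the generic fibre $\mathbb{P}(T_X)_\eta \cong \mathbb{P}^1_K$), and producing the smooth model $X'$ together with the generically finite map $m$ in the precise form demanded by the definition of the pull-back operation.
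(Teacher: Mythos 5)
Your proof is correct, but it takes a different route from the paper. You split the defining binary $r$-form $\omega \in \mathrm{Sym}^r(\Omega^1)$ in one step over its splitting field $L/K = \mathbb{C}(X)$, realize $L$ as the function field of a smooth surface $X'$ via resolution of singularities, and then identify $m^\ast \mathcal W$ with $\mathcal F_1 \boxtimes \cdots \boxtimes \mathcal F_r$ by uniqueness of saturated extensions (Propositions \ref{saturationoffoliation} and \ref{web-saturation}); each step is sound, repeated linear factors cause no trouble since the definition of complete decomposability allows $\mathcal F_i = \mathcal F_j$, and the bookkeeping issues you flag (roots at infinity when $f_0 = 0$, agreement of the two saturations on a dense open) are genuine but resolve exactly as you indicate. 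The paper instead argues by induction on $r$: it takes an irreducible component $Z$ of the horizontal divisor $D(\mathcal W) \subset \mathbb{P}(T_X)$, notes that the restricted projection $\phi: Z \rightarrow X$ is generically finite and that $\phi^\ast \mathcal W$ acquires a \emph{tautological section} corresponding to $\mathrm{id}: Z \rightarrow Z$, splits off the corresponding foliation $\phi^\ast \mathcal W = \mathcal F \boxtimes \mathcal W_2$, and applies the induction hypothesis to $\mathcal W_2$ after shrinking $Z$ to its smooth locus. The two arguments are closely related — the paper's tower of components of incidence divisors is precisely a step-by-step construction of (a subfield of) your splitting field, adjoining one root of the binary form at a time — but they buy different things: your version is non-inductive, exhibits the cover explicitly as the splitting field (hence controls its degree and makes the Galois structure visible, which is what Lemma \ref{pushforward-finitegroups} later exploits), at the price of invoking resolution of singularities to get a smooth model of $L$; the paper's induction stays entirely inside the geometry of $\mathbb{P}(T_X)$, where the cover arises canonically as a divisor component and smoothness is obtained for free by restricting to a dense open set, so no resolution theorem is needed.
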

 
 \begin{proof}
 We prove that statement by induction on $r \geq 1$. The case $r = 1$ is vacuous so assume the statement holds for some $r \geq 1$. Let $\mathcal W$ be an $(r + 1)$- web on $X$. Let $Z$ be an irreducible component of $D(\mathcal W)$ and denote by $\phi: Z \rightarrow X$ the restriction of the canonical projection of $\mathbb{P}(T_X)$ to $Z$. 
 
By construction, $\phi$ is generically finite and $\phi^\ast \mathcal W$ admits a tautological section corresponding to $id: Z \rightarrow Z$ so we can write $$\phi^\ast \mathcal W = \mathcal F \boxtimes \mathcal W_2$$
 where $\mathcal F$ is the foliation on $Z$ corresponding to this section.
 
 Up to restricting $Z$ to a dense open set (or up to a birational modification), we can assume that $Z$ is smooth. Using the induction hypothesis, we obtain a generically finite cover $\phi_2: Z' \dashrightarrow Z$ such that $\phi_2^\ast \mathcal W_2$ is completely decomposable. It follows that $(\phi_2 \circ \phi)^\ast \mathcal W = \phi_2^\ast \mathcal F \boxtimes \phi_2^\ast \mathcal W_2 $ is completely decomposable.
 \end{proof}
 
 \subsubsection{Quotient by a finite group} Assume that $G$ is a finite group acting regularly on an affine variety $X$ such that the quotient $Y = X/G$ is smooth. We denote by $\phi: X \rightarrow Y$ the canonical projection.
 
 {\lem \label{pushforward-finitegroups} Let $\mathcal W$ be an $r$-web on $X$. With the notation above, denote by $\mathcal W = \lbrace \mathcal  W_1, \ldots , \mathcal W_s \rbrace$ the orbit of $\mathcal W$ under $G$. There exists a web $\mathcal W_Y$ on $Y = X/G$ such that:
 $$ \phi^\ast \mathcal W_Y = \mathcal W_{s_1} \boxtimes \cdots \boxtimes \mathcal W_{s_l}$$
 where $s_1,\ldots, s_l \in [1,\ldots , s]$.}

 \begin{proof}
The web $\mathcal W$ is generated by a symmetric $r$-form $\omega$. We denote by  $\lbrace \omega_1,\ldots,\omega_N \rbrace$ the orbit of $\omega$ under the action of $G$.
 
 By construction, $\omega =   \omega_1 \boxtimes \ldots \boxtimes \omega_N$ is $G$-invariant and $Y$ is smooth so there exists a symmetric form $\omega_Y$ on $Y$ such that
 $$ \phi^\ast \omega_Y = \omega_1 \boxtimes \ldots \boxtimes \omega_N$$

Denote by $\mathcal W_Y$ the web generated by $\omega_Y$ then $$\phi^\ast \mathcal W_Y = \mathcal W(\omega_1) \boxtimes \cdots \boxtimes \mathcal W(\omega_N)$$

where $\mathcal W(\omega_i)$ is the web generated by $\omega_i$. This proves the lemma since  $\mathcal W(\omega_i) \in \lbrace \mathcal  W_1, \ldots , \mathcal W_s \rbrace$.
 \end{proof}

\subsection{Analytic leaves of a foliation} Let $\mathcal F$ be an algebraic foliation on $X$ with finite set $S \subset X$ of singularities. We denote by $X^{an}$ the (complex) analytification of $X$ and by $\mathcal F^{an}$ the analytification of $\mathcal F$.

{\defn Let $\gamma: \mathbb{D} \rightarrow X^{an}$ be an analytic curve on $X$ defined on some open disk $\mathbb{D} \subset \mathbb{C}$. We say that \textit{$\gamma$ is tangent to the foliation $\mathcal F$} if 
$$ \gamma^\ast \omega = 0 \text{ for every local section } \omega \in \mathcal F(U).$$}

The relation on $X \setminus S$ defined for $x, y \in X \setminus S$ by:
\begin{center}
$x \sim_\mathcal F y$ if there exists an analytic curve tangent to the foliation $\mathcal F$ going through $x$ and $y$.
\end{center}
is an equivalence relation and the equivalences classes are analytic Riemannian surfaces immersed in $X$ called \textit{the leaves of $\mathcal F$}.

On the other hand, if $s \in S$ is a singularity of $\mathcal F$, an analytic curve tangent to $\mathcal F$ going through $s$ is called \textit{an analytic separatrix of $s$}.   It is a theorem of Camacho and Sad \cite{Camacho-Sad} that each singular point of $\mathcal F$ admits always at least one separatrix but in that case uniqueness can fail as witnessed  by the foliation tangent to the planar vector field $x \frac \partial {\partial x} + y \frac \partial {\partial y}$: $0$ is a singularity and all lines through $0$ are  tangent to the foliation.

{\cons Let $f : X \dashrightarrow C$ be a rational dominant map from $X$ to a smooth algebraic curve $C$.

The morphism $f$ is generically smooth so there exists an open set $U \subset X$ such that$ f_{|U}: U \rightarrow C$ is well-defined and smooth. It follows that the kernel $T_{X/C}$ of the differential 
$$df: T_X \rightarrow f^\ast T_C.$$  
is a subvector bundle of rank one and defines a foliation on $U$. By Proposition \ref{saturationoffoliation}, this foliation can be uniquely extended into a  (possibly singular) foliation on $X$ denoted  $\Theta_{X/C}$ and called the \textit{foliation  tangent to the fibres of $f$}.}

{\lem \label{algebraicallyintegrable}Let $\mathcal F$ be a foliation on $X$ with finite set $S \subset X$ of singularities. The following are equivalent:
\begin{itemize}
\item[(i)] the foliation $\mathcal F$ has algebraic leaves: if $L$ is a leaf of $\mathcal F$, then the Zariski-closure of $L$ is an algebraic curve of $X$.
\item[(i')] There are no analytic curve on $X^{an}$ tangent to $\mathcal F$ and Zariski-dense in $X$.
\item[(ii)] $ \mathcal F = \Theta_{X/C}$ for some rational dominant morphim $f: X \dashrightarrow C$.
\item[(ii')] There exists an open set $U \subset X$ and a morphism $f : U \rightarrow C$ such that the leaves of $\mathcal F_{|U}$ are the connected components of the fibres of $f$.
\end{itemize}}

When these conditions are realized, we say that $\mathcal F$ is an \textit{algebraically integrable foliation}. 

\begin{proof}
The equivalences $(i) \Leftrightarrow (i)'$ and $(ii) \Leftrightarrow (ii)'$ follow from the definitions. The equivalence $(i) \Leftrightarrow (ii)$ is Jouanoulou's theorem: a foliation with infinitely many algebraic leaves admits a rational integral. 
\end{proof}

{\defn Let $\mathcal W$ be an $r$-web on $X$. We say that $\mathcal W$ is \textit{algebraically integrable} if for some generically finite $\phi: X' \dashrightarrow X$ such that 
$$\phi^\ast \mathcal W = \mathcal F_1 \boxtimes \ldots \boxtimes \mathcal F_r $$
is completely decomposable, the foliations $\mathcal F_1 , \ldots,\mathcal F_r$ are all algebraically integrable.}

In that case, for every generically finite $\phi: X' \dashrightarrow X$ such that 
$\phi^\ast \mathcal W = \mathcal F_1 \boxtimes \ldots \boxtimes \mathcal F_s $ is completely decomposable, the foliations $\mathcal F_1 , \ldots,\mathcal F_s$ are algebraically integrable.
\subsection{Transversality and locus of tangency for webs} We first define the notion for foliations:

{\defn Let $\mathcal F$ and $\mathcal G$ be two foliations on $X$. We say that \textit{$\mathcal F$ and $\mathcal G$ meet transversely at a point $x \in X$} if $\mathcal F$ and $\mathcal G$ are not singular at $x \in X$ and the lines $\mathcal F_x \subset T_{X,x}$ and $\mathcal G_x \subset T_{X,x}$ are transverse lines.}

We denote by $\mathrm{tang}(\mathcal F,\mathcal G)$ the set of points $x \in X$ where $\mathcal F$ and $\mathcal G$ do not meet transversely.

{\lem \label{tangency-foliations} Let $\mathcal F$ and $\mathcal G$ be two distinct foliations by curves on $X$.  Then $\mathrm{tang}(\mathcal F,\mathcal G)$ is a closed algebraic subset of $X$ of pure codimension one.}

In other words, either $\mathrm{tang}(\mathcal F,\mathcal G) = \emptyset$ or $\mathrm{tang}(\mathcal F,\mathcal G)$ is a (possibly reducible) algebraic curve on $X$.  
\begin{proof}
It suffices to prove the statement locally on a covering of $X$ so we can assume that $X$ is affine endowed with an étale map 
$$(x,y): X \rightarrow \mathbb{A}^2.$$

This implies that $\Omega^1_X = \mathcal O_X dx \oplus \mathcal O_X dy.$  
Moreover, the foliations $\mathcal F$ and $\mathcal G$ are respectively generated by global one-forms $\omega_\mathcal F, \omega_\mathcal G \in \Omega^1_X(X)$ and we write

$$\omega_\mathcal F = f_1 dx + f_2dy \text{ and } \omega_\mathcal G = g_1 dx + g_2dy  $$ 
where $f_1,f_2,g_1,g_2 \in \mathcal O_X(X)$.

{\asser The following are equivalent:
\begin{itemize}
\item[(i)] $x \in \mathrm{tang}(\mathcal F,\mathcal G)$ .
\item[(ii)] The one forms $\omega_\mathcal F(x)$ and $\omega_\mathcal G(x)$ do not form a basis of the fibre of the cotangent space $T_{X,x}^\ast$ of $X$ over $x$.
\item[(iii)] the two-form $\omega_\mathcal F \wedge \omega_\mathcal G$ vanishes at $x$.
\item[(iv)] Let $D \in \mathcal O_X(X)$ be the determinant $D = f_1g_2-f_2g_1$ then $D(x) = 0$
\end{itemize}}

Since $\mathcal F$ and $\mathcal G$ are distinct, $D$ does not vanish identically on $X$. This implies that  $\mathrm{tang}(\mathcal F,\mathcal G)$ is a closed algebraic subset of $X$ of pure codimension one
\end{proof}

{\defn Let $\mathcal W_1$ and $\mathcal W_2$ be two webs on $X$. We write $W_1 = |D(\mathcal W_1)|$ and $W_2 = |D(\mathcal W_2)|$ which are two hypersurfaces of $\mathbb{P}(T_X)$. We say that $\mathcal W_1$ and $\mathcal W_2$ \textit{meet transversely} at $x \in X$ if
$$ W_{1,x} \cap W_{2,x} = \emptyset \subset \mathbb{P}(T_{X,x})$$
where $W_{i,x}$ denote the fibre over $x$ of the restriction $\pi_{|W_i}: W_i \rightarrow X$ of the canonical projection $\pi: \mathbb{P}(T_X) \rightarrow X$.}

We denote by $\mathrm{tang}(\mathcal W_1,\mathcal W_2)$ the set of points $x \in X$ where $\mathcal W_1$ and $\mathcal W_2$ do not meet transversely. Note that $\mathrm{Sing}(\mathcal W_1) \cup \mathrm{Sing}(\mathcal W_2) \subset  \mathrm{tang}(\mathcal W_1,\mathcal W_2)$ since $x$ is a singular point of $\mathcal W_i$ if and only if $W_{i,x} = \mathbb{P}(T_{X,x})$.

{\Prop \label{tangency-webs}  Let $\mathcal W_1$ and $\mathcal W_2$ be two webs on $X$.  Then
\begin{itemize}
\item[(i)] Either $\mathcal W_1$ and $\mathcal W_2$ share a common sub web: there exists a  web $\mathcal W_3$ on $X$ such that
$$W_3 \subset W_1 \cap W_2 \text{ where } W_i = |D(\mathcal W_i)| \subset \mathbb{P}(T_X) \text{ for } i =1,2,3.$$
\item[(ii)] or $\mathrm{tang}(\mathcal W_1,\mathcal W_2)$ is a closed algebraic subset of $X$ of pure codimension one.
\end{itemize}}
\begin{proof}
The statement is local so we can assume that $X$ is affine so that $\mathcal W_1$ and $\mathcal W_2$ are generated by global sections $\omega_1 \in \mathrm{Sym}^{r_1}(\Omega^1_X)(X)$ and $\omega_2 \in \mathrm{Sym}^{r_2}(\Omega^1_X)(X)$. We denote by $Res(\omega_1,\omega_2) \in \mathcal O_X(X)$ the determinant the $\mathcal O_X(X)$-linear map:

$$ \begin{cases}
\mathrm{Sym}^{r_2 - 1}(\Omega^1_X)(X) \times  \mathrm{Sym}^{r_1 - 1}(\Omega^1_X)(X) \rightarrow  \mathrm{Sym}^{r_1 + r_2 - 1}(\Omega^1_X)(X) \\ 
(\eta_1, \eta_2) \rightarrow \eta_1 \boxtimes \omega_1 + \eta_2 \boxtimes \omega_2 
\end{cases}$$

Note that $Res(\omega_1,\omega_2)$ is simply the resultant of two homogeneous polynomials of two variables under the identifcation of graded algebras $Sym^\bullet \Omega^1_X(X) \simeq \mathcal O_X(X)[dx,dy]$. It follows from the theory of the resultant that:

{\asser Let $x \in X$. The following are equivalent:
\begin{itemize}
\item[(i)] $ W_{1,x} \cap W_{2,x} \neq \emptyset \subset \mathbb{P}(T_{X,x})$
\item[(ii)] $Res(\omega_1,\omega_2)(x) = 0$
\end{itemize}}
 
 To conclude the proof, it remains to see that  if $Res(\omega_1,\omega_2) = 0$ at the generic point of $X$ then  $\mathcal W_1$ and $\mathcal W_2$  share a common sub web: indeed, since $W_1 \cap W_2$ projects generically on $X$ so does one of its irreducible component $Z$.
 
Since $Z$ dominates $X$, $Z$ has codimension one in $\mathbb{P}(T_X)$ and defines a horizontal divisor in $Div_h(\mathbb{P}(T_X)$. By Proposition \ref{equivalence webs algebraic sections}, $Z$ is of the form $|D(\mathcal W_3)|$ for some algebraic web $\mathcal W_3$ on $X$.
\end{proof}

\subsection{Discriminant of a reduced web}

{\defn Let $\mathcal W$ be an $r$-web on $X$. Denote by $W = |D(\mathcal W)| \subset \mathbb{P}(T_X)$. We say that the $r$-web $\mathcal W$ is \textit{smooth at x} if $|W_x| = r$ where $W_x$ is the fibre of $W$ over $x$.}

Note that $\pi_{|W}: W \rightarrow X$ is generically finite and that all finite fibres have cardinal $\leq r$ so that the set of points $x \in X$ such that $\mathcal W$ is smooth at $x$ form a Zariski open set $U \subset X$.

{\lem \label{generically-smooth-analytic} Let $\mathcal W$  be  an $r$-web on $X$.
\begin{itemize}
\item[(i)] The web $\mathcal W$ is generically smooth if and only if it is reduced.
\item[(ii)] The web $\mathcal W$ is smooth at $x \in X$ if and only if there exists an analytic neighborhood $U$ of $x \in X^{an}$ and analytic foliations $\mathcal F_1,\ldots, \mathcal F_r$ on $U$ pairwise transverse such that
$$\mathcal W_{\mid U}^{an} = \mathcal F_1 \boxtimes \ldots \boxtimes \mathcal F_r.$$
\end{itemize}}

\begin{proof}
\item[(i)] Clearly, if $\mathcal W$ is not reduced then it is not generically smooth. Conversely, assume that $\mathcal W$ is reduced.  We write $D(\mathcal W) = \sum Z_i$ where $Z_i$ are horizontal irreducible hypersurfaces.

Using Proposition \ref{equivalence webs algebraic sections}, we can write $Z_i = D(\mathcal W_i)$ for some web $\mathcal W_i$ on $X$. Since $Z_i$ and $Z_j$ are irreducible and distinct, $\mathcal W_i$ and $\mathcal W_j$ don't share a common sub web. By Proposition \ref{tangency-webs}, it follows that for $i \neq j$, $Z_{i,x} \cap Z_{j,x} = \emptyset$ for generic points $x \in X$. We conclude that
$$|W_x| = |\bigcup_{i} Z_{i,x}| = \sum_i |Z_{i,x}| = r \text{ for generic } x \in X.$$
has cardinal $r$.

\item[(ii)] We can assume that $\mathcal W$ is reduced (otherwise both of the statements fail for all $x \in X$). We denote by $U$ the (dense) open set of smooth points of $\mathcal W$ and consider
$$\pi_U: W \cap \mathbb{P}(T_U) \rightarrow U $$
Note that $\pi_U$ is proper and that all the fibres have the same cardinal $r$. It follows that $W \cap \mathbb{P}(T_U)$ is smooth and $\pi_U$ is an étale covering of degree $r$ of $U$. (ii) therefore follows from the fact that any étale covering of degree $r$ is locally trivial in the analytic topology.
\end{proof}

{\defn Let $\mathcal W$ be a reduced $r$-web on $X$. The closed subset of $X$ constituted of the points $x \in X$ such that $\mathcal W$ is not smooth at $x$  is called \textit{the discriminant locus of $\mathcal W$}  and denoted $\Delta(\mathcal W)$.} 

{\cor \label{discriminant of a web} Let $\mathcal W$ be a reduced $r$-web on $X$.  The discriminant locus $\Delta(\mathcal W)$ of $\mathcal W$ has pure codimension one.}
\begin{proof}
The statement is local so we can assume that $\mathcal W$ is generated by a global section $\omega \in \mathrm{Sym}^r(\Omega^1_X)(X)$.

We also fix an identification  $\mathrm{Sym}^\bullet(\Omega^1_X)(X) \simeq \mathcal O_X(X)[dx,dy]$ and consider the derivation induced on $\mathrm{Sym}^\bullet(\Omega^1_X)(X)$ by the  homogeneous derivation of degree $-1$ defined by

$$ \delta_x(dy) = 0 \text{, } \delta_x(dx) = 1  \text{ and } \delta_x{|\mathcal O_X(X)} = 0.$$ 
{\asser Denote by $\delta_x W$ the  $(r - 1)$-web generated by $\delta_x(\omega)$. Then 
$$ \Delta(W) = tang(\mathcal W,\delta_x \mathcal W)$$}

\begin{proof}
Indeed a homogeneous polynomial $P(x,y)$ does not have a multiple linear factor if and only if $Res(P,\delta_x(P)) = 0.$
\end{proof}

Since $\mathcal W$ is reduced, $\mathcal W$ and $\delta_x \mathcal W$ do not share a common subweb. It follows from Proposition \ref{tangency-webs} that $tang(\mathcal W,\delta_x \mathcal W)$ has pure codimension one.
\end{proof}

\section{Algebraic factors of a two-dimensional complex $D$-variety}

\subsection{Rational and algebraic factors} We work in the category of (smooth) irreducible $D$-varieties over the differential field $(\mathbb{C},0)$. The objects of this category are pairs $(X,v)$ where $X$ is a (smooth) complex algebraic variety endowed with a complex algebraic vector field $v$.

{\defn Let $(X,v)$ be a smooth and irreducible complex $D$-variety.
\begin{itemize}
\item A \textit{rational factor of $(X,v)$}, denoted $\phi: (X,v) \dashrightarrow (Y,w)$,  is a dominant rational morphism $\phi: X \dashrightarrow Y$ which satisfies $d\phi(v) = w$ at the generic point of $Y$ (or, equivalently, everywhere where $\phi$ is defined).

\item A \textit{generically finite extension} of $(X,v)$ is a rational dominant morphism $\phi: (X',v') \dashrightarrow (X,v)$ in the category of  complex $D$-varieties such that the generic fibre of $\phi$ is finite.

\item An \textit{algebraic factor of $(X,v)$} is a rational factor of some generically finite extension $(X',v')$ of $(X,v)$.
\end{itemize}}

The aim of this section is to prove the following theorem:

{\Thm\label{maintheorem} Let $(X,v)$ be a smooth, irreducible complex $D$-variety of dimension $2$. Denote by $\mathcal F(v)$ the foliation on $X$ tangent to $v$. Assume there exists a zero $p \in X(\mathbb{C})$ of $v$ such that:
\begin{center}
(no algebraic separatrix) there is no complex closed algebraic curve on $X$ containing $p$ and invariant  under $v$.
 \end{center}
 Then the only algebraic webs on $X$ invariant under the vector field $v$ are of the form $$\mathcal W = \mathcal F(v) \boxtimes \cdots \boxtimes \mathcal F(v).$$
 Moreover, $(X,v)$ does not have any algebraic factor of dimension $1$.}

\subsection{Invariant webs and foliations} Let $X$ be a smooth complex algebraic variety of dimension two  and $v$ be a vector field on $X$. Recall $v$ can be identified with a derivation $\delta_v$ on $\mathcal O_X$ defined by

$$ \delta_v(f) = df(v) \text{ for every local section } f \text{ of }   \mathcal O_X.$$

{\lem \label{lemma-Liederivative} Let $v$ be a vector field on $X$. There is a unique extension of the derivation $\delta_v$ on $\mathcal O_X$ into a derivation  $\mathcal L_v$ of the graded $\mathcal O_X$-algebra $ \mathrm{Sym}^\bullet(\Omega_X)$ satisfying:  
\begin{itemize}
\item $\mathcal L_v$ is an homogeneous derivation of degree $0$: $\mathcal L_v( \mathrm{Sym}^r(\Omega_X)) \subset \mathrm{Sym}^r(\Omega_X)$,
\item for every local function $f \in \mathcal O_X(U)$,  $\mathcal L_v(df) = d(\delta_v(f))$.
\end{itemize}}

\begin{proof}
We can assume that $X = \mathrm{Spec}(A)$ is affine. It is well known that any $D$-module structure on  $\Omega^1_A$  extends uniquely to a   homogeneous derivation of degree $0$ of  $Sym^\bullet (\Omega^1_A)$ so it suffices to show the existence of $\mathcal L_v$ on $\Omega^1_A$.
This follows from \citep[Lemma 1]{Ros} or \cite[Section 1.1]{moi3}.
\end{proof}
The derivation $\mathcal L_v$ is called the \textit{Lie-derivative of the vector field $v$} in differential geometry. Under the identification $p^\ast Sym^{\bullet}(\Omega^1_X) \simeq \mathcal O_{T_X}$ described in notation \ref{notationweb},  the derivation $\mathcal L_v$ defines a vector field denoted $Tv$ on $T_X$.

{\Prop \label{differential stucture} Let $v$ be a vector field on $X$. The vector field $Tv$ descends to a vector field $\mathbb{P}(v)$ on $\mathbb{P}(T_X)$ such that the following diagram holds in the category of $D$-varieties:

$$\xymatrix{(T_X \setminus \lbrace 0-\text{section}\rbrace,Tv) \ar[d] \ar[r]  &  (\mathbb{P}(T_X),\mathbb{P}(v)) \ar[ld] \\ (X,v)}.$$}

 \begin{proof}
 By considering a covering of $X$, we can assume that $X$ is affine and endowed with étale coordinates $(x,y)$ and we write
 $v = a \frac \partial {\partial x} + b \frac \partial {\partial y}$ with $a,b \in \mathcal O_X(X)$.
 
 Note that if $T_X$ is also affine with étale coordinates $(x,y,dx = \xi_1, dy= \xi_2)$ and that $\mathbb{P}(T_X)$ is covered by two affine open sets
$$U_1 =: (dx \neq 0) \text{ and } U_2:= (dy \neq 0)$$
 with respective étale coordinates $(x,y,\xi_1/\xi_2)$ and $(x,y,\xi_2/\xi_1)$. Using Lemma \ref{lemma-Liederivative}, we get that
  $$\mathcal L_v(\xi_1) = \mathcal L_v(dx) = \frac {\partial a} {\partial x} dx + \frac {\partial a} {\partial y} dy \text{  and  } \mathcal L_v(\xi_2) =  \frac {\partial b} {\partial x} dx + \frac {\partial b} {\partial y} dy.$$ 
 
 Computing $\mathcal L_v(\xi_1/\xi_2)$ using the Leibniz rule, we deduce that  
 $\xi_1/\xi_2$ satisfies the Ricatti equation with parameters in $\mathcal O_X(X)$:

$$ \mathcal L_v(\xi_1/\xi_2) = (\frac {\partial a} {\partial x} - \frac {\partial b} {\partial y} ) \xi_1/\xi_2  + \frac {\partial a} {\partial y} - \frac {\partial b} {\partial x}  (\xi_1/\xi_2)^2$$
 
 So the vector field $\mathbb{P}(v)$ on $U_1$ in the coordinates $(x,y,t_1 = \xi_1/\xi_2)$ is given by:
 $$\mathbb{P}(v)(x,y,t_1) = a \frac \partial {\partial x} + b \frac \partial {\partial x} +  \Big[ ( \frac {\partial a} {\partial x} - \frac {\partial b} {\partial y} )t_1 + \frac {\partial a} {\partial y} - \frac {\partial b} {\partial x}  t_1^2 \Big] \frac \partial {\partial t_2}$$

 A similar computation for $t_2 = \xi_2/\xi_1$ show that the in the open set $U_2$ the vector field $\mathbb{P}(v)$ is given by: 
 
 $$\mathbb{P}(v)(x,y,t_2) = a \frac \partial {\partial x} + b \frac \partial {\partial x} +  \Big[ ( \frac {\partial b} {\partial y} - \frac {\partial a} {\partial x} )t_2 + \frac {\partial b} {\partial x} - \frac {\partial a} {\partial y}  t_2^2 \Big] \frac \partial {\partial t_1}.$$
 
 Since $U_1$ and $U_2$ cover $\mathbb{P}(T_X)$ and the two vector fields agree on $U_1 \cap U_2$, we obtain a global vector field on $\mathbb{P}(v)$ satisfying the conclusion of the proposition.
 \end{proof}
%

{\Prop \label{equivalence-invariantweb} Let $v$ be a vector field on $X$,  let $\mathcal W$ be an $r$-web on $X$ and denote by $W$ the associated horizontal divisor of $\mathbb{P}(T_X)$. TFAE:
\begin{itemize}
\item[(i)] $\mathcal W$ is stable under the Lie-derivative of $v$: $ \mathcal L_v(\mathcal W) \subset \mathcal W$
\item[(ii)] The divisor $W$ is a closed subscheme of $\mathbb{P}(T_X)$ invariant under $\mathbb{P}(v)$: the (principal) ideal defining $W$ in $\mathbb{P}(T_X)$ is invariant under the derivation induced by $\mathbb{P}(v)$ on $\mathbb{P}(T_X)$. 
\item[(iii)] All the irreducible components of $W$ are invariant under $\mathbb{P}(v)$.
\end{itemize}}

\begin{proof}
$(i) \Rightarrow (ii)$ is obvious and $(ii) \Rightarrow (iii)$ is well-known (see Appendix of \cite{moi}). To prove that $(iii) \Rightarrow (i)$, we first assume that $W$ is irreducible. 

We denote the canonical projections by $\rho: T_X \setminus \lbrace 0-\text{section} \rbrace \rightarrow \mathbb{P}(T_X)$ and $ p: T_X \rightarrow X$. By Proposition \ref{differential stucture}, $\overline{\rho^{-1}(W)}$ is a closed invariant subvariety of $(T_X,Tv)$.

Let $U$ an affine open set with étale coordinates $x,y \in \mathcal O_X(U)$ and $\omega \in \mathrm{Sym}^r(\Omega^1_X(X))$ a symmetric $r$-form generating the ideal defining $Z = \overline{\rho^{-1}(W)}$ . Then $\mathcal W(U)$ is generated by $\omega$ (see the proof of Proposition \ref{equivalence webs algebraic sections}). Since $Z$ is invariant, there exists $h \in \mathcal O_{T_X}(p^{-1}(U))$ such that:
$$\mathcal L_v(\omega) = h\omega.$$

But $\mathcal L_v$ is homogeneous of degree $0$, so $\mathcal L_v(\omega)$ is a symmetric $r$-form and $h$ has degree $0$. It follows easily that $\mathcal W_{|U} = \mathcal O_U.\omega$ is invariant and after covering $X$ by such open sets that $\mathcal W$ is invariant.

The non-irreducible case follows from the fact that if $\mathcal W = \mathcal W_1 \boxtimes \mathcal W_2$ and $\mathcal W_1,\mathcal W_2$ are invariant then $\mathcal W$  is invariant (straightforward using the Leibniz rule).
\end{proof}

{\defn Let $v$ be a vector field on $X$ and $\mathcal W$ be an $r$-web on $X$. We say that the web $\mathcal W$ is \textit{invariant under the vector field $v$} if the equivalent properties of Lemma \ref{equivalence-invariantweb} hold.}

{\cor \label{invariant-reduced} Let $v$ be a vector field on $X$ and $\mathcal W$ be an invariant web. Then $\mathcal W_{red}$ is invariant.}

\subsection{Analytic interpretation of invariance} We now give an analytic interpretation of the previous definition for reduced webs.

{\nota Given a vector field $v$ on $X$ and $\epsilon > 0$, we call an analytic open set $U \subset X^{an}$ \textit{$\epsilon$-complete} (relatively to $v$) if for every $x \in U$, the unique analytic solution $\gamma(x)$ of $v$ through $x$ is defined at all times $t \in \mathbb{C}$ of norm less than $\epsilon$.

If $U$ is $\epsilon$-complete and $|t| \leq \epsilon$, we denote by $\phi_t : U \rightarrow X$ the local analytic flow of $v$ which associates to a point $x$ the value $\gamma_x(t)$ at time $t$ of the unique solution $\gamma_x$ of $v$ through $x$.}

{\rem Let $p: T_X \rightarrow X$ be the canonical projection and let $v$ be a vector field on $X$. If $U \subset X^{an}$ is an $\epsilon$-complete analytic open set then $p^{-1}(U) = T_U \subset T_X^{an}$ is an $\epsilon$-complete analytic open set for the vector field $Tv$ on $T_X$ and the local analytic flow  of $Tv$ on $p^{-1}(U)$ is the differential $d\phi_t: T_U \rightarrow T_X^{an}$ of the local analytic flow $\phi_t: U \rightarrow X^{an}$ of $v$ on $U$.

Moreover, $\phi_t$ being a local analytic isomorphism,  the differential $d\phi_t$ is invertible and sends lines to lines of $T_X$. This defines a local analytic flow 
$$\mathbb{P}(d\phi_t): \pi^{-1}(U) \rightarrow \mathbb{P}(T_X)$$  
where $\pi: \mathbb{P}(T_X) \rightarrow X$ is the canonical projection. So $\pi^{-1}(U)$ is $\epsilon$-complete for the vector field $\mathbb{P}(v)$ too.}

{\Prop\label{invariance for webs} Let $v$ be a vector field on $X$ and let $\mathcal W$ be a reduced  $r$-web on $X$.  Denote by $W = |D(\mathcal W)| \subset \mathbb{P}(T_X)$ The following are equivalent:
\begin{itemize}
\item[(i)] The web $\mathcal W$ is invariant under the vector field $v$.
\item[(ii)] For all $\epsilon > 0$ and all $\epsilon$-complete open set $U \subset X^{an}$, $$\mathbb{P}(d\phi_t)(W(\mathbb{C}) \cap \mathbb{P}(T_U)) \subset W(\mathbb{C})$$. 
\item[(iii)] For all $\epsilon > 0$ and all $\epsilon$-complete open set $U \subset X^{an}$, we have $$\phi_t^\ast \mathcal W (U) \subset \mathcal W(\phi_t^{-1}(U)) \text{ for all } t \text{ with } |t| \leq \epsilon.$$
\item[(iv)] For some $\epsilon > 0$ and some $\epsilon$-complete non empty open set $U \subset X^{an}$, we have $$\phi_t^\ast \mathcal W (U) \subset \mathcal W(\phi_t^{-1}(U)) \text{ for all } t \text{ with } |t| \leq \epsilon.$$
\end{itemize}}

\begin{proof}
It suffices to prove (ii) and (iii) for $\epsilon$-complete open set $U \subset X^{an}$ which are contained in affine open sets with étale coordinates (as any $\epsilon$-complete open set will be covered by such open sets). So we can assume that $X = \mathrm{Spec}(A)$ is affine and that we are given two étale coordinates $(x,y) : X \rightarrow \mathbb{A}^2$  and we write as usual 

$$\Omega^1_X(X) = A dx + A dy \text{ and } \mathrm{Sym}^\bullet(\Omega^1_X(X)) = A[dx,dy].$$

Note that $T_X = \mathrm{Spec}(A[dx,dy])$ so that, as before, every symmetric $r$-form on $X$ can be identified with a function on $T_X$. Since $X$ is affine, the web $\mathcal W$ is generated by a global section $\omega \in \mathrm{Sym}^r(\Omega^1_X(X))$

$(i) \Rightarrow (ii)$ We denote by $p: T_X \setminus \lbrace 0-\text{section} \rbrace \rightarrow \mathbb{P}(T_X)$ and set $Z = \overline{\pi^{-1}(W)} \subset T_X$ which is a closed hypersurface of $T_X$. By definition of $W$, the ideal $I_Z$ defining $Z$ is generated by $\omega$.

Since $\mathcal L_v$ is the derivation associated to the vector field $Tv$ and $\mathcal W$ is invariant under the vector field $v$, the ideal defining $Z$ is invariant under the $\mathcal L_v$. It follows that $Z$ is a closed invariant subvariety of $(T_X,Tv)$. By \citep[Proposition 3.1.20] {moi} and the remark above, it follows that: 
$$d\phi_t(Z \cap T_U) \subset Z.$$

The corresponding statement for $W$ follows.

$(ii) \Rightarrow (iii)$ Since the web $\mathcal W$ is reduced, for every analytic open set $U \subset X^{an}$, we have:
$$\omega \in \mathcal W(U)^{an} \text{ if and only if } \omega \text{ vanishes identically on } W \cap T_U$$

So that $d\phi_t(W \cap U) \subset W$ implies that $\phi_t^\ast \mathcal W (U) \subset \mathcal W(\phi_t^{-1}(U))$.

$(iii) \Rightarrow (iv)$ For every vector field $v$, $\epsilon$-complete open sets $U \subset X^{an}$ do exist.

$(iv) \Rightarrow (i)$ Consider the global section $\mathcal L_v(\omega) \wedge \omega$ of the vector bundle $\Lambda^2(\mathrm{Sym}^r(\Omega^1_X))$ on $X$. 

On an analytic open set $U \subset X^{an}$ satisfying (iv), we have that
$$\mathcal L_v(\omega) \wedge \omega = (\frac d {dt}_{|t = 0}  \phi_t^\ast \omega) \wedge \omega =  \frac d {dt}_{|t = 0}  ( \phi_t^\ast \omega \wedge \omega ) = 0  $$

So the global section $\mathcal L_v(\omega) \wedge \omega$ vanishes identically on a non empty analytic set. Since $X$ is irreducible, $\mathcal L_v(\omega) \wedge \omega = 0 $ on $X$. It follows that $\mathcal L_v(\omega) = h \omega$ for some rational function $h$ on $X$ and that $\mathcal W$ is an invariant web.
\end{proof}

\subsection{Characteristic loci of invariant webs and foliations}

{\cor \label{invariance-singularlocus} Let $v$ be a vector field on $X$ and $\mathcal W $ be an $r$-web on $X$ invariant under the vector field $v$.

The singular locus of the web $\mathcal W$ is a finite subset of $X$ contained in the set of zeros of $v$.}

\begin{proof}
We already know that the singular locus of the web $\mathcal W$  is finite. So it suffices to prove that $\mathrm{Sing}(\mathcal W)$ is invariant (as the invariant points in $X(\mathbb{C})$ are precisely the zeros of $v$).

So for the sake of a contradiction assume that there exists an $\epsilon$-complete open set $U$, $p \in \mathrm{Sing}(\mathcal W) \cap U$ and $q =  \phi_t(p) \notin \mathrm{Sing}(\mathcal W)$ where $\phi_t$ denotes the local flow of the vector field $v$.

Since $q \notin \mathrm{Sing}(\mathcal W)$, there exists a local section $\omega \in \mathcal W^{an}_q$ around $q$ which does not vanish at $q$. By Proposition \ref{invariance for webs},  $\phi_t^\ast \omega \in  \mathcal W^{an}_p$ and does not vanish at $p$ since $\phi_t$ is local isomorphism. It follows that $p \notin \mathrm{Sing}(\mathcal W)$ which is a contradiction.
\end{proof}

{\cor \label{discriminant-invariant} Let $v$ be a vector field on $X$ and $\mathcal W $ be a reduced $r$-web on $X$ invariant under the vector field $v$.

The discriminant $\Delta(\mathcal W)$ is either $\emptyset$ or a closed algebraic curve of $X$ invariant under $v$.}

\begin{proof}
Using Corollary \ref{discriminant of a web}, it is enough to prove that $\Delta(\mathcal W)$  is invariant. So for the sake of a contradiction assume that there exists an $\epsilon$-complete open set $U$, $p \in \Delta(\mathcal W) \cap U$ and $q =  \phi_t(p) \notin \Delta(\mathcal W)$ where $\phi_t$ denotes the local flow of the vector field $v$.

Since $q \notin \Delta(\mathcal W)$, using Lemma \ref{generically-smooth-analytic}, there are local analytic pairwise transverse
foliations $\mathcal F_1, \ldots, \mathcal F_r$ pairwise transverse such that
$$  \mathcal W_q = \mathcal F_1 \boxtimes \ldots \boxtimes \mathcal F_r$$ 

By Proposition \ref{invariance for webs}, we deduce that:
$$\mathcal  W_p = \phi_t^\ast W_q = \phi_t^\ast \mathcal F_1 \boxtimes \ldots \boxtimes  \phi_t^\ast \mathcal F_r.$$

Since $\phi_t$ is a local isomorphism, the foliations  $\phi_t^\ast \mathcal F_1, \ldots,   \phi_t^\ast \mathcal F_r$ are pairwise transverse so $p \notin \Delta(\mathcal W)$. Contradiction.
\end{proof}

{\cor \label{invariance-tangency} Let $v$ be a vector field , $\mathcal W_1$ and $\mathcal W_2$ be webs on $X$  invariant under $v$. Then $tang(\mathcal W_1,\mathcal W_2)$ is a  closed invariant subvariety of $(X,v)$.}

\begin{proof}
We can assume that $\mathcal W_1$ and $\mathcal W_2$ are reduced by Corollary \ref{invariant-reduced}. We can also assume that $\mathcal W_1$ and $\mathcal W_2$ do not share a common subweb (otherwise, $tang(\mathcal W_1,\mathcal W_2) = X$ and there is nothing to do).

Under that assumption the web $\mathcal W_1 \boxtimes \mathcal W_2$ is reduced and
$$tang(\mathcal W_1,\mathcal W_2) = \Delta(\mathcal W_1 \boxtimes \mathcal W_2).$$
So the corollary follows from Corollary \ref{discriminant-invariant} since $\mathcal W_1 \boxtimes \mathcal W_2$ is invariant (as a sum of two invariant webs).
\end{proof}
%
%

\subsection{Algebraic factors and invariant webs}  We start by recalling the relationship between rational factors and invariant foliations developed in \cite{moi3}. We will use the notions described in Section 1.5 of algebraically integrable foliations and foliations tangent to the fibres of a dominant rational map $\phi: X \dashrightarrow C$

%

{\Prop[{\citep[Proposition 3.1.5]{moi3}}]\label{extension}   Let $f : (X,v) \dashrightarrow (C,w)$ be a rational factor of dimension one of a smooth irreducible complex $D$-variety $(X,v)$. The foliation $\Theta_{X/C}$ is invariant under the vector field $v$.}    

Note that by definition the foliation $\Theta_{X/C}$ is algebraically integrable. So if $(X,v)$ admits a rational factor of dimension one then it admits an invariant algebraically integrable foliation. 

{\Prop\label{linearizationofalgebraicfactors} Let $(X,v)$ be a smooth irreducible complex $D$-variety. If $(X,v)$ admits an algebraic factor of dimension $1$ then for some $r \geq 1$, the $D$-variety $(X,v)$ admits an invariant  algebraically integrable $r$-web $\mathcal W$.}

\begin{proof}
Assume that $(X,v)$ admits an algebraic factor of dimension one. This means that there exists a generically finite extension $\phi: (X',v') \dashrightarrow (X,v)$ and a rational factor $f: (X',v') \dashrightarrow (C,w)$ of $(X',v')$.

Up to extending $X'$, we can assume that the extension $\mathbb{C}(X) \subset \mathbb{C}(X')$ is a Galois extension of fields and we denote by $G$ its Galois group. We can find a dense affine open set $U'$ in $X'$ such that the birational action of $G$ on $X'$ is regular on $U'$ and stabilizes $U'$ and such that  $U = U'/G$ is a dense open set of $X$ and the restriction of $\phi$ to $U$ is the quotient morphism 

$$\pi: U' \rightarrow U'/G = U.$$

By Proposition \ref{extension}, the foliation $\Theta_{X'/C}$ is an algebraically integrable foliation on $X'$ invariant by $v'$.  By Proposition \ref{pushforward-finitegroups}, there exists an $r$-web $\mathcal W_U$ on $U$  such that 
$$(\ast): \phi^\ast \mathcal W_U = \mathcal F_1 \boxtimes \ldots \boxtimes \mathcal F_r$$
where $\mathcal F_1,\ldots, \mathcal F_r$ are foliations on $X$ conjugated to $\Theta_{X'/C}$ under the action of $G$. By Proposition \ref{web-saturation}, the web $\mathcal W_U$ extends uniquely to a web denoted $\mathcal W$ on $X$. We claim that this web is algebraically integrable and invariant under $v$.
\begin{itemize}
\item Since  $\Theta_{X'/C}$ is an algebraically integrable foliation on $X'$,  $\mathcal F_1,\ldots, \mathcal F_r$ which are conjugate to $\Theta_{X'/C}$ under the action of $G$ are also algebraically integrable. Using $(\ast)$, this implies that $\mathcal W_U$ is algebraically integrable, so that $\mathcal W$ is algebraically integrable.

\item Note that $v$ is invariant under the action of $G$ (since it descends to a vector field on $X$). Since  $\Theta_{X'/C}$ is invariant under $v$, this implies that $\mathcal F_1,\ldots, \mathcal F_r$ are also invariant foliations. So using $(\ast)$, $\phi^\ast \mathcal W_U$ is an invariant web on $X'$. 
\end{itemize}

Consider a local generator $\omega$  of $\mathcal W_U$ then  $\phi^\ast \omega$ is a generator of $\phi^\ast \mathcal W_U$ and
$$ \phi^\ast (\mathcal L_v(\omega) \wedge \omega) = \mathcal L_{v'}( \phi^\ast\omega) \wedge \phi^\ast \omega = 0$$

Since $\phi$ is dominant, $\phi^\ast$ is injective and $ \mathcal L_v(\omega) \wedge \omega = 0$. It follows that $\mathcal W_U$ and hence $\mathcal W$ is invariant.
\end{proof}

\subsection{Proof of Theorem \ref{maintheorem}} Let $(X,v)$ be a  smooth complex $D$-variety of dimension two satisfying the hypotheses of Theorem \ref{maintheorem}. 

We want to show that if $\mathcal W$ an $r$-web on $X$ invariant under the vector field $v$ then $\mathcal W = \mathcal F(v) \boxtimes \cdots \boxtimes \mathcal F(v)$. We prove it by induction or $r \geq 1$:
\begin{itemize}
\item If $r = 1$, then $\mathcal W$ is an invariant foliation $\mathcal F$ on $X$. If $\mathcal F \neq \mathcal F(v)$, by Lemma \ref{tangency-foliations}, $tang(\mathcal F,\mathcal F(v))$ is of pure codimension one.
\end{itemize}
Moreover, by definition, $p \in tang(\mathcal F,\mathcal F(v))$ so that $tang(\mathcal F,\mathcal F(v))$ is a complex algebraic curve on $X$ containing $p$. By Corollary \ref{invariance-tangency}, the algebraic curve  $tang(\mathcal F,\mathcal F(v))$  is invariant under the vector field $v$. This contradicts the assumption made on $p$.

\begin{itemize}
\item Assume now that $r \geq 1$ and that the result holds for all $r$-webs and consider $\mathcal W$ an invariant $(r+1)$-web on $X$. Like for the case $r = 1$,  $p \in tang(\mathcal W,\mathcal F(v))$ and  $tang(\mathcal W,\mathcal F(v))$ is a closed subvariety of $X$ invariant under $v$.
\end{itemize}
By Proposition \ref{tangency-webs}, either $tang(\mathcal W,\mathcal F(v))$  has  pure codimension one or $\mathcal W$ and $\mathcal F(v)$ share a common subweb. The first case can not hold by assumption on $p$ and as $\mathcal F(v)$ (like any foliation) is irreducible as a web, the second case implies that

$$\mathcal W = \mathcal W_1 \boxtimes  \mathcal F(v)$$
for some $r$-web $\mathcal W_1$ on $X$. Since $\mathcal W$ is invariant, Proposition \ref{invariance for webs} implies that $\mathcal W_1$ is also invariant, since the irreducible components of the horizontal divisor associated to $\mathcal W_1$ are invariant. So we can apply the induction hypothesis to $\mathcal W_1$.

This finishes the proof of the first part of Theorem \ref{maintheorem}. To prove the second part,  we first note that the foliation $\mathcal F(v)$ is not algebraically integrable:

Indeed, assume that $\mathcal F(v)$ is algebraically integrable.  By the theorem of Camacho and Sad \cite{Camacho-Sad}, the singularity $p$ always admits at least one analytic separatrix, i.e. $p$ is contained in an analytic curve $C$ tangent to the foliation $\mathcal F(v)$.  Since $\mathcal F(v)$ is algebraically integrable, the Zariski-closure $\overline{C}$ of $C$ is an algebraic curve containing $p$. Since $\overline{C}$ is the Zariski closure of a curve tangent to $v$, it is also invariant under $v$ (see Proposition 3.1.23 of \cite{moi}), which contradicts our hypothesis that $p$ is not contained in any algebraic curve tangent to $v$.

\vspace{0.3cm}

Since $\mathcal F(v)$ is not algebraically integrable, the first part of the theorem implies that $(X,v)$ does not admit any invariant algebraically integrable web.  Hence, by Proposition \ref{linearizationofalgebraicfactors}, $(X,v)$ does not admit any algebraic factor of dimension one. This proves the second part of Theorem \ref{maintheorem}. \qed
\section{Disintegration and minimality of planar algebraic vector fields}

\subsection{Differentially closed fields} We work in the ambient theory $\textbf{DCF}_0$ of differentially closed fields of characteristic $0$ and \textbf{we fix $(\mathcal U, \delta)$ a saturated model}. It is well-known that the theory $\textbf{DCF}_0$ is $\omega$-stable and admits the elimination of quantifiers and imaginaries in the language $\mathcal L_\delta = \lbrace 0,1,+,\times,- \rbrace$ of differential fields. See Chapter 2 of \cite{MTF} for a self-contained presentation.

We will use the formalism of Zariski geometries to present definable sets of finite rank living in $\U$ as described in Section 1 of \cite{Itai}:  If $X$ is an algebraic variety defined over the field of constants $\U^\delta$ of  $\U$, the derivation $\delta$ induces a definable map
$$ \nabla: X(\U) \rightarrow T_X(\U).$$

If $X \subset \mathbb{A}^n$ is an affine variety then $\nabla$ is defined by $$\nabla(x_1,\ldots,x_n) = (x_1,\ldots x_n, \delta(x_1),\ldots \delta(x_n)) \in X(\U) \times \U^n $$
and one easily checks (using the Leibniz rule) that $\nabla$ takes values in the closed subset  $T_X(\U) \subset X(\U) \times \U^n$. More generally, this construction can be carried out  for an arbitrary algebraic variety $X$ by covering $X$ by affine open sets.

Note that the point of view of Section 1 of \cite{Itai} is more general and that we focus here on \textit{autonomous} differential equations or in other words differential equations defined over constant differential fields $k \subset \U^\delta$ where $\U^\delta = \lbrace x \in \U \text{ | } \delta(x) = 0\rbrace.$

{\defn Let $k \subset \U^\delta$ be a subfield, $X$ be an algebraic variety over $k$ and $v$ be a vector field on $X$ defined over $k$. The $k$-definable set associated to $(X,v)$ denoted $(X,v)^\delta$ is given by the equalizer:

$$(X,v)^\delta = \lbrace x \in X(\U) \text{ | } \nabla(x) = v(x)\rbrace.$$}

{\Prop \label{geometric description} Let $k \subset \U^\delta$ be a field of characteristic $0$, $X$ be an algebraic variety over $k$ and $v$ be a vector field on $X$ defined over $k$. 
\begin{itemize}
\item[(i)] Any $k$-definable subset of $(X,v)^\delta \times \ldots \times (X,v)^\delta$ is a Boolean combination of  subsets of the form $(Z,v_{|Z})^\delta$ where $Z$ a closed variety of $X \times \ldots \times X$ invariant under the vector field $v \times \ldots \times v$.

\item[(ii)] If $X$ is irreducible then $(X,v)^\delta$ contains a unique type denoted $p_{(X,v)} \in S(k)$ of maximal transcendence degree. It is the unique type $p(x) \in S(k)$ satisfying:
$$x \in (X,v)^\delta \text{ and } x \notin (Z,v_{|Z})^\delta  \text{ for every proper closed invariant } Z \subset X.$$

\item[(iii)] Any type $p \in S(k)$ of finite rank is interdefinable with a type of the form $p_{(X,v)}$ for some $k$-irreducible $D$-variety $(X,v)$.

\item[(iv)] Let $(Y,w)$ be another $k$-irreducible $D$-variety. Denote by $S$ the set of germs of $k$-definable maps sending $p_{(X,v)}$ to $p_{(Y,w)}$. Then there is a natural correspondence   
$$ S \simeq \lbrace \phi: X \dashrightarrow Y \text{ rational and dominant with }  d\phi(v) = w \rbrace.$$
\end{itemize}}

See Proposition 1.1 and Proposition 1.2 of \cite{Itai}.

{\defn Let $X$ be an algebraic variety over some field $k$ of characteristic $0$, endowed with a vector field $v$. The \textit{geometric language associated to $(X,v)$} denoted $\mathcal L_{(X,v)}$ is simply the relational language with one  $n$-ary predicate $R_Z$ for each closed irreducible invariant subvariety $Z$ of $(X,v)^n$.}

\vspace{0.1cm}

Note that $(X,v)^\delta$ can either be seen as a definable set of the theory $\textbf{DCF}_0$ or as an abstract structure in the geometric language where the interpretation of the $n$-ary predicate  $R_Z$ is given by 

$$ (Z,v_{|Z})^\delta \subset (X,v)^\delta \times \cdots \times (X,v)^\delta.$$

Proposition \ref{geometric description} shows that these two points of view are essentially equivalent. In particular $(X,v)^\delta$ is strongly minimal and disintegrated as a definable subset of $\U$ if and only if it has the same properties viewed as an abstract structure in the geometric language.

\vspace{0.1cm}
The main theorem of this section is the following result which uses Theorem \ref{maintheorem} together with structural results of the theory $\textbf{DCF}_0$ and its definable sets from the 90s  (for instance \cite{Sok}).

{\Thm[Model-theoretic version]\label{Maintheorem2} Let $d \geq 3$ and let $v$ be a planar algebraic vector field of degree $d$ with $\mathbb{Q}$-algebraically independent coefficients.

The set of solutions of the differential equation $(\mathbb{A}^2,v)$ in a differentially closed field is strongly minimal and disintegrated.} 

{\rem The conclusion of Theorem \ref{Maintheorem2} does not hold for any planar algebraic vector field of degree $d = 1$, since they all define differential equations which are linear and hence of Morley rank two, internal to the constants. However, it is possible that, similarly to Theorem \ref{Ilyashenko}, Theorem \ref{Maintheorem2} in fact holds for all degrees $d \geq 2$.

Using the results of this article, the version of Theorem \ref{Maintheorem2} in degree $d = 2$ is equivalent to the existence of \textit{one} planar algebraic vector field of degree $2$ orthogonal to the constants.}

 \subsection{Families of $D$-varieties with constant coefficients}  We first gather some properties of families of autonomous differential equations and describe what follows from the structure of the algebraically closed field $\U^\delta$ of constants.

{\defn Let $k \subset \U^\delta$ be a subfield.  We call a \textit{$k$-algebraic family of varieties endowed with vector fields with constant coefficients}, any surjective morphism $\pi: (\mathcal X, v) \longrightarrow (S,0)$ in the category of algebraic varieties endowed with vector fields over $k$.}

We fix $\pi: (\mathcal X, v) \longrightarrow (S,0)$ a  $k$-algebraic family of varieties endowed with vector fields with constant coefficients. 
If $s \in S(\U^\delta)$, the fibre of $\pi$ over $s$ denoted $(\mathcal X,v)_s$ is an autonomous differential equation with parameters in $k(s)$.  We consider the property $\mathcal P(s)$ given by
\begin{center}
$\mathcal P(s)$: the set of solutions $(\mathcal X,v)^\delta_s$ of $(\mathcal X,v)_s$ is strongly minimal disintegrated.
 \end{center}

{\lem\label{lemmequivalence} Let $k$ be a countable subfield of $\U^\delta$ and let $\pi: (\mathcal X, v) \longrightarrow (S,0)$ be a $k$-algebraic family of varieties endowed with vector fields with constant coefficients.

If $s,t$ are two elements of $S(\U^\delta)$ realizing the same type (in the sense of $\textbf{ACF}_0$) over $k$ then $\mathcal P(s)$ holds if and only if $\mathcal P(t)$ holds.} 

\begin{proof}
Consider two elements $s,t \in S(\U^\delta)$ which realize the same type (in $\textbf{ACF}_0$) over $k$. Since the field of constants of $(\U,\delta)$  is a pure algebraically closed fields, the elements $s,t \in S(\mathbb{C})$ also satisfy the same type over $k$ in the differentially closed field $\U$. 

By saturation of $(\U,\delta)$, it follows that there exists an automorphism $\sigma$ of $(\U,\delta)$ fixing $k$ and such that $\sigma(s) = t$. It follows that

$$\sigma((\mathcal X,v)_s^{\delta}) = (\mathcal X,v)_{\sigma(s)}^{\delta} = (\mathcal X,v)_{t}^{\delta}.$$

Since $\sigma$ is an automorphism of $\U$, for every definable set $D$, $D$ is strongly minimal and disintegrated if and only if $\sigma(D)$ has the same properties. Hence $\mathcal P(s)$ holds if and only if $\mathcal P(t)$ holds.
\end{proof}

{\Prop \label{equivalence} Let $k$ be a countable subfield of $\mathbb{C}$ and  $\pi: (\mathcal X, v) \longrightarrow (S,0)$ be a $k$-algebraic family of varieties endowed with vector fields with constant coefficients.

Assume that $S$ is irreducible, then the following are equivalent:
\begin{itemize}
\item[(i)] The property $\mathcal P(s)$ holds on a measurable set of parameters $s \in S(\mathbb{C})$ of positive Lebesgue measure.
\item[(ii)] The property $\mathcal P(s)$ holds on a somewhere dense $G_\delta$-set  of $S(\mathbb{C})^{an}$: the property $\mathcal P(s)$ holds on a countable intersection $A = \bigcap_{i \in \mathcal I} U_i$ of open sets $U_i$ of  $S(\mathbb{C})^{an}$ such that $\overline{A}$ has non empty interior. 
\item[(iii)] The property $\mathcal P(s)$ holds for some/any realization $s$ of the generic type of $S$ over $k$ (in the sense of $\textbf{ACF}_0$). 
\end{itemize}}

\begin{proof}
$(i) \Rightarrow(iii)$ Since $k$ is countable, there are countably many non-generic types living on $S(\mathbb{C})$. For each of these types $p$, the set of realizations of $p$ in  $S(\mathbb{C})$ is contained in a proper closed algebraic subvariety of $S(\mathbb{C})$ of Lebesgue measure zero.

So the set of non-generic elements of  $S(\mathbb{C})$ (over $k$) has Lebesgue measure zero. It follows that any measurable set of positive Lebesgue measure contains a generic element.

$(ii) \Rightarrow(iii)$ Since $k$ is countable, the set of generic elements (over $k$) is a countable intersection of dense open sets hence it is a everywhere dense $G_\delta$-set of $S(\mathbb{C})^{an}$.

As such, it must intersect any somewhere dense $G_\delta$-set of $S(\mathbb{C})^{an}$ so that  any somewhere dense $G_\delta$-set of $S(\mathbb{C})^{an}$ contains a generic element of $S(\mathbb{C})$.

$(iii) \Rightarrow (i),(ii)$ follows from Lemma \ref{equivalence} since the set of generic elements in $S(\mathbb{C})$ over $k$ is an everywhere dense $G_\delta$-set and its complement has  Lebesgue measure zero.
\end{proof}

{\defn Let $d \geq 1$. We say that a complex vector field $v = f(x,y) \frac \partial {\partial x} + g(x,y) \frac \partial {\partial y}$ of degree $d$ has \textit{$\mathbb{Q}$-algebraically independent coefficients} if after writing
$$f(x,y) = \sum_{i+ j \leq d} f_{i,j}x^iy^j \text{ and }g(x,y) = \sum_{i+ j \leq d} g_{i,j}x^iy^j,$$
the complex coefficients $f_{i,j}$ and $g_{i,j}$ for $i + j \leq d$ are all distinct, non zero and $\mathbb{Q}$-algebraically independent.} 

{\cor\label{reduction} Theorem \ref{Maintheorem2} holds if and only if it holds for a specific planar algebraic vector field of degree $d$ with $\mathbb{Q}$-algebraically independent coefficients.} 

\begin{proof}
Denote by $S_d \simeq \mathbb{A}_\mathbb{Q}^{(d + 1)(d+2)}$ the parameter space for vector fields of degree $\leq d$ and consider the $\mathbb{Q}$-algebraic family of varieties endowed with vector fields with constant coefficient given by all vector fields of degree $\leq d$:
$$\pi : (\mathcal X = \mathcal{A}^2 \times S_d , v_d) \longrightarrow (S_d,0).$$

In other words, $v_d$ is is an algebraic vector field on $\mathcal X$, tangent  to the fibres of $\pi$ such that for every complex point $s \in S_d(\mathbb{C})$, the fibre $(\mathcal X,v_d)_s$ is isomorphic (via the second projection on $\mathbb{A}^2$) to $(\mathbb{A}^2,v_s)$.

Lemma \ref{lemmequivalence} applied to this family shows that Theorem \ref{Maintheorem2} holds if and only if it holds for a realisation of the generic type of $S_d$ over $\mathbb{Q}$, namely for a planar algebraic vector field of degree $d$ with $\mathbb{Q}$-algebraically independent coefficients.
\end{proof}

\subsection{Complex invariant curves} The behavior of complex invariant curves for a complex planar vector field with $\mathbb{Q}$-algebraically independent coefficients is described by the work of Petrovskii and Landis in the 50s:

{\Thm[\cite{Petro}; {\citep[Theorem 25.56]{Ily}}]\label{Ilyashenko} Let $d \geq 2$. A complex planar vector field of degree $d$ with $\mathbb{Q}$-algebraically independent coefficients admits no invariant algebraic curve (excepted the line at infinity which is invariant by the foliation generated by $v$).}

\subsection{Minimality via algebraic factors}  We say that a complex $D$-variety $(X,v)$ \textit{does not admit non-trivial rational factors} if $(X,v)$ is a generically finite extension of every rational factor of $(X,v)$ of positive dimension. We say that $(X,v)$ \textit{does not admit non-trivial algebraic factors} if every generically finite extension of $(X,v)$ does not admit non-trivial rational factors.

The heart of our strategy to the proof of Theorem \ref{Maintheorem2} is the following proposition:

{\Prop\label{modeltheoryreduction} Let $k$ be a subfield of $\U^\delta$ and $ (X,v)$ be an absolutely irreducible $D$-variety over $(k,0)$ of positive dimension.
\begin{itemize}
\item[(i)] If the $D$-variety $(X,v)$ does not admit non-trivial rational factors then its generic type is semi-minimal.
\item[(ii)] If the $D$-variety $(X,v)$ does not admit non-trivial algebraic factors and if its generic type is orthogonal to the constants then its generic type is minimal.
\end{itemize}}

\begin{proof}

We work inside a saturated differentially closed field $\U$ for the proof. Denote by $p \in S(k)$ the generic type of $(X,v)$ and denote by $a$ a realization of $p$ inside $\U$. \\

(i). Since $p$ is a stationary type of finite U-rank which is not algebraic, there exists an extension of the parameters $(k,0) \subset (l,\delta_l)$ and a minimal type $q \in S(l)$ such that $p$ is non-orthogonal to $q$ by \citep[Lemma 5.1, Chapter 2]{GST}.

Consider the set $\mathcal Q$ of types (with parameters in  $\U$) conjugate to the type $q$ over $k$. The set of types $\mathcal Q$ is $k$-invariant and hence by the Decomposition Lemma \citep[Lemma 4.5, Chapter 7]{GST}, there exists $a_0 \in \mathrm{dcl}(a,k) \setminus \mathrm{acl}(k)$ such that $\mathrm{tp}(a_0/k)$ is internal to $\mathcal Q$.

Consider a $D$-variety $(Y,w)$ over $(k,0)$ such that its generic type is interdefinable with  $\mathrm{tp}(a_0/k)$. Since $a_0 \in \mathrm{dcl}(a,k)$ and $a$ realizes the type $p$, there exists a rational dominant morphism of $D$-varieties $\pi: (X,v) \dashrightarrow (Y,w)$ (see Proposition \ref{geometric description}). But $(X,v)$ does not admit non-trivial rational factors, so $\pi$ has a finite generic fibre or in other words $a \in \mathrm{acl}(a_0,k)$. It follows that $\mathrm{tp}(a/k)$ is almost internal to the set $\mathcal Q$ of conjugates of the minimal type $q$ and therefore semi-minimal. \\

 (ii). We already know by (i) that the type $p$ is semi-minimal. Moreover, by assumption $p$ is orthogonal to the constants. By the canonical base property, it follows that $p$ is analyzable by locally modular types and therefore one-based (see \citep[Fact 1.3]{Chat} for example). 

{\asser Let $p \in S(k)$ be a one based stationary type which is not algebraic. There exists a minimal type $q \in S(k)$ such that $p$ and $q$ are not almost orthogonal (over $k$).}

\begin{proof}
Consider $a \models p$ and an extension of parameters $k \subset l$ such that $U(\mathrm{tp}(a/l)) = U(\mathrm{tp}(a/k)) - 1$ and $\mathrm{tp}(a/l)$ is stationary and set $e$ for the canonical base of $\mathrm{tp}(a/l)$. By construction, since $p$ is one-based, $e \in \mathrm{acl}(a,k)$.  As e $\notin acl(k)$, the type $q = \mathrm{tp}(e/k)$ is not almost orthogonal to $p$. Moreover, we claim that $U(q) = 1$. Indeed:
$$ U(a/k) = U(e,a/k)  = U(a,e/e,k) + U(e/k) = U(a/k) - 1 +  U(e/k). \qedhere$$
\end{proof}

Consider such a minimal type $q \in S(k)$ and $b \models q$. Since the type $q$ is minimal and not almost orthogonal to $p$, we have that $b \in \mathrm{acl}(k,a)$. Consider a $D$-variety $(X',v')$ over $(k,0)$ such that its generic type is interdefinable with $\mathrm{tp}(a,b/k)$ and $(Y,w)$ a $D$-variety over $(k,0)$ such that its generic type is interdefinable with  $\mathrm{tp}(b/k)$ (using Proposition \ref{geometric description} (iii) again).

Since $b \in \mathrm{acl}(a,k)$, the $D$-variety $(X',v')$ is a generically finite extension of $(X,v)$ and by construction $(Y,w)$ is a rational factor of $(X',v')$. But $(X,v)$ does not admit any non-trivial algebraic factors. Hence, the generic fibre of the rational factor  $(X',v') \dashrightarrow (Y,w)$ is finite so that $a \in \mathrm{acl}(b,k)$. It follows that $U(a/k) = U(b/k) = 1$ and therefore $p$ is minimal.
\end{proof}

\subsection{Disintegration of semi-minimal types with constant parameters} The results of this section are consequences, in geometric terms, of the Hrushovski-Sokolovic Theorem (cf \cite{Sok}). We will work over a subfield $k$ of the field $\U^\delta$ of constants.

{\nota Let $v$ be a vector field on an algebraic variety $X$ over $k$. For every $n \geq 2$, we denote by $\mathcal I_n(X,v)$ the set of invariant irreducible subvarieties of the product $D$-variety $\underbrace{(X,v) \times \cdots \times (X,v)}_\text{n times}$, which project dominantly on each factor.}

{\lem[{\citep[Appendix A]{moi}}]\label{simple properties} Let $v$ be a vector field on an algebraic variety $X$ over $k$ and $\pi: X^{m} \rightarrow X^{n}$ any projection on coordinates with $m \geq n$.

\begin{itemize}
\item[(i)] If $Z,Z'$ are two elements of $\mathcal I_n(X,v)$, then the irreducible components of the intersection $Z \cap Z'$ are elements of $\mathcal I_n(X,v)$.
\item[(ii)] If $Z$ is an element of $\mathcal I_m(X,v)$, then the Zariski-closure $\overline{\pi(Z)}$ of $\pi(Z)$ is an element of $\mathcal I_n(X,v)$.
\item[(iii)] If $Z$ is an element of $\mathcal I_n(X,v)$ then the irreducible components of $\pi^{-1}(Z)$ are elements of $\mathcal I_n(X,v)$.
\end{itemize}}

The following definition makes sense for any collection of algebraic subvarieties of $X^n$ (for some algebraic variety $X$) satisfying the conclusion of Lemma \ref{simple properties}:

{\defn Let $v$ be a vector field on an algebraic variety $X$ over $k$. We say that the sequence $(\mathcal I_n(X,v))_{n \in \mathbb{N}}$ is \textit{eventually constant} if there exists $r \geq 2$ such that any element $Z \in \mathcal I_n(X,v)$ can be written as an irreducible component of an intersection:

$$ \bigcap_{E \in P_r(n)} \pi_E^{-1}(Z_E).$$ 
where $E$ runs over the subsets of $r$ elements of $\lbrace 1, \cdots, n \rbrace$, $\pi_E$ denotes the projections on the coordinates in $E$ and $Z_E$ is an element of $\mathcal I_r(X,v)$.}

{\defn \label{generically-disintegrated} Let $v$ be a vector field on an algebraic variety $X$ over $k$. We say that $(X,v)$ is \textit{generically disintegrated} if the sequence $(\mathcal I_n(X,v))_{n \in \mathbb{N}}$ is eventually constant starting at $r = 2$.}

{\Thm\label{Hrushovski-Sokolovic consequences} Let $v$ be a vector field on an algebraic variety $X$ over $k$. Assume that the generic type of $(X,v)$ is semi-minimal. The following are equivalent:
\begin{itemize}
\item[(i)] The differential equation $(X,v)$ is generically disintegrated.

\item[(ii)] The sequence $(\mathcal I_n(X,v))_{n \in \mathbb{N}}$ is eventually constant.

\item[(iii)] For all $n \in \mathbb{N}$, $(X,v)^n$ does not admit any non-constant rational integral. 
\end{itemize}}

\begin{proof}
Clearly, $(i) \Rightarrow (ii) \Rightarrow (iii)$. The implication $(iii) \Longrightarrow (i)$ is proved using Hrushovski-Sokolovic in the first section of \cite{moi}.
\end{proof}

\subsection{Orthogonality to the constants} 

{\defn  Let $v$ be a vector field on an algebraic variety $X$ over some field $k$ of characteristic $0$. We say that $(X,v)$ is \textit{orthogonal to the constants} if the condition (iii) of Theorem \ref{Hrushovski-Sokolovic consequences} is fulfilled.}

{\Thm[\cite{moi}]\label{orthogonality} Let $d \geq 3$. A complex planar vector field of degree $d$ with $\mathbb{Q}$-algebraically independent coefficients is orthogonal to the constants.}
\vspace{0.2cm}

This theorem is the only part in the proof of Theorem \ref{introa} where we actually use that $d \geq 3$ and not only that $d \geq 2$. Using a specialization argument, we show in \cite{moi} that to prove the theorem in degree $d \geq d_0$, it is enough to construct \textit{one} complex planar algebraic vector field of degree $d_0$ which satisfies the conclusion of Theorem \ref{orthogonality}.

For $d_0 = 3$, we proceed as follows: consider the vector field $v(x) = x^2(x-1) \frac \partial {\partial x}$ on $\mathbb{A}^1$. Since $\frac 1 {x^2(x-1)}$ has both a double pole and a simple one, it is neither the logarithmic derivative nor the derivative of a rational function in $(\mathbb{C}(x),\frac \partial {\partial x})$. The main theorem of \cite{Ros} implies that $(\mathbb{A}^1,v)$ is orthogonal to the constants. If follows that  the planar vector field of degree $3$:
$$w(x,y) = x^2(x-1) \frac \partial {\partial x} + y^2(y-1) \frac \partial {\partial y}$$
has the same property. Note that a similar construction is not possible when $d_0 = 2$.

\subsection{Proof of Theorem \ref{Maintheorem2}}  Let $d \geq 3$ and let $v(x,y) = f(x,y) \frac \partial {\partial x} + g(x,y) \frac  \partial {\partial y}$ be a complex algebraic vector field of degree $d$ with $\mathbb{Q}$-algebraically independent coefficients.

The set of zeros of the vector field $v$ is defined by $$f(x,y) = g(x,y) = 0.$$ So it is a finite subset of  $\mathbb{C}^2$ of cardinal $d^2$ by assumption on $f(x,y)$ and $g(x,y)$. We denote by $p \in \mathbb{C}^2$ one of the zeros of $v(x,y)$,  by $F \subset \mathbb{C}$ the finitely generated subfield of the complex numbers generated by the coefficients of $v$ and by $q \in S(F)$ the generic type of $(\mathbb{A}^2,v)$ over $F$.

{\asser The type $q$ is minimal and disintegrated.}

 \begin{proof}
 By Theorem \ref{orthogonality}, the type $q$ is orthogonal to the constants. Moreover, by Theorem \ref{Ilyashenko}, the vector field $v$ does not admit any invariant algebraic curve. In particular, the zero $p \in \mathbb{C}^2$ has no algebraic separatrix. So the hypotheses of Theorem \ref{maintheorem} are fulfilled and we conclude that $(\mathbb{A}^2,v)$ has no algebraic factors of dimension one. 
 
The first part of the claim now follows from Proposition \ref{modeltheoryreduction} (ii) while the second part follows from  Proposition \ref{modeltheoryreduction} (i) and  Theorem \ref{Hrushovski-Sokolovic consequences}. \end{proof}

{\asser[{\citep[Theorem 6.1]{Moosa-Freitag}}] Let $A \subset \U$ and $q \in S(A)$ be a type of order $\leq 2$. Then the Lascar rank and the Morley rank of $q$ agree.}
\vspace{0.1cm}

The claim follows from the finiteness of the number of invariant hypersurfaces for a vector field without non constant rational integral (Jouanolou Theorem, see \cite{Joua}). It was noticed first by Marker and Pillay in the case of type of order $\leq 2$ over constant differential field and later generalized by Freitag and Moosa to arbitrary types of order $\leq 2$.

Using the two previous claims, we obtain that $q$ is strongly minimal and disintegrated. To conclude that the set of solutions $(\mathbb{A}^2,v)^\delta$ in a differentially closed field $(\U,\delta)$ is strongly minimal and disintegrated, it remains to study the non-generic behavior:

{\asser With the notation above, denote by $\mathcal Q$ the set of realizations of $q$ in $\mathcal U$. Then $(\mathbb{A}^2,v)^\delta \setminus \mathcal Q$ is a finite subset of $\mathrm{acl}(F)$.}

\begin{proof}
Let $r \in S(F)$ be a type living on $(\mathbb{A}^2,v)^\delta$ which is not the generic type. By Proposition \ref{geometric description}, $r$ is the generic type of a proper irreducible closed invariant subvariety $Z$ of $(\mathbb{A}^2,v)$.  By applying Theorem  \ref{Ilyashenko} again, we see that $v$ does not admit any closed invariant algebraic curve. It follows that $\mathrm{dim}(Z) = 0$ so that $Z$ is a finite union of invariant points i.e. complex zeros of the vector field $v$. We conclude that $$(\mathbb{A}^2,v)^\delta \setminus \mathcal Q \subset \lbrace (x,y) \in \mathbb{\U}^2, f(x,y) = 0, g(x,y) = 0 \rbrace \subset \mathrm{acl}(F)$$
is a finite subset of $\mathrm{acl}(F)$.
\end{proof}

We have therefore shown that $(\mathbb{A}^2,v)^\delta$ is the union of the set of realizations in $\U$ of a strongly minimal disintegrated type and of a finite set included in $\mathrm{acl}(F)$. So $(\mathbb{A}^2,v)^\delta$ is strongly minimal and disintegrated. \qed

\section{Two consequences of Theorem \ref{Maintheorem2}}
\subsection{Minimality and irreducibility in the sense of Nishioka-Umemura} Following Umemura, we recall the definition of a differential fields of $1$-classical functions (see also the appendix of \cite{Nag2} for a model-theoretic translation of this property).

{\defn We say that a differential field $(K,\delta)$ is a \textit{differential field of $1$-classical functions} if the field of constants of $(K,\delta)$ is the field of complex numbers and if there exists a tower of differential fields:
$$(K_0, \delta_0)  = (\mathbb{C}(t), \frac d {dt}) \subset (K_1, \delta_1) \subset \ldots  \subset  (K_n, \delta_n) = (K,\delta)$$
such that for every $i \leq n - 1$, the differential field extension $(K_i, \delta_i) \subset (K_{i + 1}, \delta_{i + 1}) $ satisfies one of the following properties:
\begin{itemize}  
\item The extension $(K_i, \delta_i) \subset (K_{i + 1}, \delta_{i + 1})$ is a finite algebraic extension.
\item The extension $(K_i, \delta_i) \subset (K_{i + 1}, \delta_{i + 1})$ is a strongly normal extension.
\item The extension $(K_i, \delta_i) \subset (K_{i + 1}, \delta_{i + 1})$  is finitely generated and of transcendence degree $1$. In other words, there exists an integral integral $D$-curve $(C_,\delta_C)$ over $(K_i,\delta_i)$ such that $(K_{i + 1}, \delta_{i + 1})$ is the function field of  $(C_,\delta_C)$ .
\end{itemize}}

\nota Let $(K,\delta)$ be a differential field extension of $(\mathbb{C},0)$. If $v = f(x,y) \frac {\partial} {\partial x} + g(x,y) \frac {\partial} {\partial y}$ is a complex planar vector field, we denote $(\mathbb{A}^2,v)^{(K,\delta)}$ the set of solutions of the differential equation $(\mathbb{A}^2,v)$ in $(K,\delta)$. In other words, 
$$(\mathbb{A}^2,v)^{(K,\delta)} = \lbrace (x,y) \in K^2 \text{ | } \delta(x) = f(x,y) \text{ and } \delta(y) = g(x,y) \rbrace.$$

The previous definition can be generalized to arbitrary complex $D$-varieties $(X,v)$. In fact, if $(\U,\delta)$ is a differentially closed field extending $(K,\delta)$, then
$$ (X,v)^{(K,\delta)}= (X,v)^\delta \cap \mathrm{dcl}(K)$$
using the notation from Section 3.1.

{\rem In this language, the theorem of Landis-Petrovskii (see Theorem \ref{Ilyashenko}) can be expressed in the following form: for every general planar algebraic vector field $v$ and every differential field extension $(K,\delta)$ over $(\mathbb{C},0)$, finitely generated and of transcendence degree $1$, we have that  
$$(\mathbb{A}^2,v)^{(K,\delta)} = (\mathbb{A}^2,v)^{(\mathbb{C},0)} = \lbrace \text{stationary solutions  of }  (\mathbb{A}^2,v)  \rbrace.$$}

The following consequence of Theorem \ref{Maintheorem2} shows that it expresses a stronger non-integrability statement for vector fields of degree $d \geq 3$:

{\cor Let $v$ be a general planar algebraic vector field of degree $d \geq 3$ and let $(K,\delta)$ be any field of $1$-classical functions. Then:
$$(\mathbb{A}^2,v)^{(K,\delta)} = (\mathbb{A}^2,v)^{(\mathbb{C},0)} = \lbrace \text{stationary solutions  of }  (\mathbb{A}^2,v)  \rbrace.$$}

It follows from Proposition A.8 of \cite{Nag2} but we repeat the proof for the sake of completeness.
\begin{proof}
We work inside a saturated differentially closed field $\U$. Consider $(K,\delta)$ a sub differential field of $1$-classical functions given as a tower of differential fields:
$$(\mathbb{C},0) = (K_0,\delta_0)  \subset  (\mathbb{C}(t), \frac d {dt}) = (K_1,\delta_1) \subset \ldots \subset (K_n,\delta_n) = (K,\delta).$$

For every $i \leq  n$, consider a tuple $x_i$ such that $K_{i+1} = K_i(x_i)$ as fields; therefore $K = \mathbb{C}(t,x_1,\ldots, x_n)$. We claim that $\mathrm{tp}(t,x_1,\ldots,x_n/\mathbb{C})$ is analyzable in the constants together with the set of all order $1$ differential equations (with arbitrary parameters): indeed, $\mathrm{tp}(t/\mathbb{C})$ is internal to the constants and  for every $i \leq n$, $\mathrm{tp}(x_i/t,x_1,\ldots, x_{i - 1})$ is either an algebraic type, a type internal to the constants or the generic type of an order one differential equation.

By Theorem \ref{Maintheorem2}, the generic type $p$ of $(\mathbb{A}^2,v)$ is strongly minimal of order $2$, orthogonal to the constants. Since two minimal non-orthogonal types have the same order, it follows that $p$ is orthogonal to every type of order $1$ (with arbitrary parameters). Hence,  the type $p$ is orthogonal to any type analyzable in the constants together with the set of  all order $1$ differential equations.

In particular, the types  $\mathrm{tp}(t,x_1,\ldots, x_n/ \mathbb{C})$ and $p$ are orthogonal so that $$\mathbb{C}(t,x_1,\ldots,x_n)^{alg} \cap p(\U) = \emptyset$$

It follows that $ (\mathbb{A}^2,v)^{(K,\delta)} $ is included in the set of non-generic solutions of $(\mathbb{A}^2,v)$ which are the stationary solutions by Theorem \ref{Ilyashenko}.
\end{proof}

\subsection{Disintegration and algebraic independence of the solutions}  The theorem of Landis and Petrovskii gives a full characterization of algebraic solutions for a general algebraic vector field of degree $d \geq 2$.  We consider the following generalizations of this result:

{\defn Let $v$ be an algebraic vector field on a complex algebraic variety $X$ and let $n$ be an integer. We say that the vector field \textit{$v$ satisfies the property $(C_n)$} if  every distinct analytic, non stationary, solutions $\gamma_1,\ldots, \gamma_n$  of $(X,v)$ are $\mathbb{C}$-algebraically independent.}
\vspace{0.2cm}

The Landis-Petrovskii Theorem (Theorem \ref{Ilyashenko}) shows that $(C_1)$ holds for general planar algebraic vector fields of degree $d \geq 2$. We show that for $n \geq 2$, the implication  $(C_2) \Rightarrow (C_n)$ holds for generic vector fields of degree $d \geq 3$.

{\defn\label{desintegration} Let $X$ be an algebraic variety over some field $k$ of characteristic $0$, endowed with a vector field $v$. We say that the vector field $v$ is \textit{disintegrated} if any irreducible closed invariant subvariety of $(X,v) ^n$ for some $n \geq 3$ can be written as an irreducible component of
$\bigcap_{i \neq j} \pi_{i,j}^{-1}(Z_{i,j})$
where, for all $i \neq j$, $\pi_{i,j}$ denotes the projection on the coordinates $i$ and $j$ and $Z_{i,j}$ is an irreducible closed invariant subvariety of $(X,v) \times (X,v)$.}
\vspace{0.1cm}

We refer to \cite{moi} and section 2.5 for a more detailed study of this notion in the language of algebraic varieties with vector fields. The following corollary states that the disintegration property is a typical property for vector fields of degree $d \geq 3$:

{\cor Let $d \geq 3$ and $v$ a general planar algebraic vector field of degree $d$.  Then $(\mathbb{A}^2,v)$ satisfies the disintegration property. In particular, for $n \geq 2$ the implication $(C_2) \Rightarrow (C_n)$ holds for a generic planar vector field of degree $d \geq 3$.}

\begin{proof}
By Theorem \ref{Maintheorem2}, the generic type of $(\mathbb{A}^2,v)$ is disintegrated and minimal. It follows that $(\mathbb{A}^2,v)$ is generically disintegrated (see \citep[Lemma 1.3.5]{moi}) or in other words that the conclusion of Definition \ref{desintegration} holds for all irreducible closed invariant subvariety of $(\mathbb{A}^2,v) ^n$  which project generically on all factors.

Let $Z \subset (\mathbb{A}^2,v)^n$ be an arbitrary irreducible closed invariant subvariety of $(\mathbb{A}^2,v)^n$. Up to a permutation of the copies of $\mathbb{A}^2$, we can assume that $Z$ projects generically exactly on the $k$ first copies of $\mathbb{A}^2$. Let $k < l \leq n$. Since the Zariski-closure of the projection of an invariant subvariety is invariant, Theorem \ref{Ilyashenko} ensures that the projection of $Z$ on the $l$-th copy of $\mathbb{A}^2$ is a zero $x_l$ of the vector field $v$. It follows that:
$$Z = Z_1 \times \lbrace  x_l \rbrace \times \ldots \times \lbrace  x_n \rbrace.$$  
where $Z_1 \subset (\mathbb{A}^2,v)^k$ is an invariant subvariety projecting generically on all factors. Using that $(\mathbb{A}^2,v)$ is generically disintegrated (Definition \ref{generically-disintegrated}), we can now write $Z_1$ as $\bigcap_{i \neq j \leq k} \pi_{i,j}^{-1}(Z_{i,j})$ and similarly for $Z$.

Now assume that $(C_2)$ holds and consider $n$ distinct non stationary analytic solutions $\gamma_1,\ldots, \gamma_n$ of $(\mathbb{A}^2,v)$ defined on an open disk $\mathbb{D} \subset \mathbb{C}$. The Zariski closure $Z$ of the image of the analytic curve
$$ t \mapsto (\gamma_1(t),\ldots, \gamma_n(t))$$
is an irreducible invariant subvariety of $(\mathbb{A}^2,v)^n$. By disintegration, it follows that $Z$ is an irreducible component of  $\bigcap_{i \neq j \leq k} \pi_{i,j}^{-1}(Z_{i,j})$ where $Z_{i,j}$ are invariant subvarieties of $(X,v)^2$. Since $(C_2)$ holds and the solutions are non stationary, $Z_{i,j} = \Delta_{i,j}$ or $Z_{i,j} = X^2$. But  as the solutions are distinct we must have $Z_{i,j} = X^2$ and therefore $Z = X^n$. This exactly means that the $n$ analytic solutions $\gamma_1, \ldots , \gamma_n$ are $\mathbb{C}$-algebraically independent.
\end{proof}

\bibliographystyle{alpha}
\bibliography{bibliographie}

\end{document}